\def\tsc#1{\csdef{#1}{\textsc{\lowercase{#1}}\xspace}}
\newtheorem{theorem}{Theorem}
\numberwithin{theorem}{section}
\newtheorem{lemma}[theorem]{Lemma}
\newtheorem{proposition}[theorem]{Proposition}
\newtheorem{assumption}[theorem]{Assumption}
\newtheorem{corollary}[theorem]{Corollary}
\newtheorem{example}[theorem]{Example}
\newtheorem{problem}[theorem]{Problem}
\newcommand{\mR}{\mathbb{R}}
\newcommand{\mS}{\mathbb{S}}
\newcommand{\st}{\text{s.t.}}
\DeclareMathOperator*{\tr}{tr}
\DeclareMathOperator*{\mE}{\mathbb{E}}
\DeclareMathOperator*{\mP}{\mathbb{P}}
\DeclareMathOperator*{\cov}{cov}
\newcommand{\rvu}{\bm{u}}
\newcommand{\Qtrue}{\bar{Q}}
\newcommand{\Rtrue}{\bar{R}}
\newcommand{\Ptrue}{\bar{P}}
\newcommand{\Ktrue}{\bar{K}}
\newcommand{\mfx}{\bm{x}}
\newcommand{\mfy}{\bm{y}}
\newcommand{\mfw}{\bm{w}}
\newcommand{\mfv}{\bm{v}}
\newcommand{\mfu}{\bm{u}}
\newcommand{\mfY}{\bm{Y}}
\newcommand{\Ctrue}{\bar{C}}
\begin{document}
\let\WriteBookmarks\relax
\def\floatpagepagefraction{1}
\def\textpagefraction{.001}

% Short title
\shorttitle{Inverse optimal control for averaged cost per stage linear quadratic regulators}    

% Short author
\shortauthors{Han Zhang, Axel Ringh}  

% Main title of the paper
\title [mode = title]{Inverse optimal control for averaged cost per stage linear quadratic regulators}  

% Title footnote mark
% eg: \tnotemark[1]
\tnotemark[1] 

% Title footnote 1.
% eg: \tnotetext[1]{Title footnote text}
\tnotetext[1]{The work of Han Zhang is supported by National Natural Science Foundation (NNSF) of China under Grant 62103276,
and the work of Axel Ringh is supported by the Wallenberg AI, Autonomous Systems and Software Program (WASP) funded by the Knut and Alice Wallenberg Foundation.} 

% First author
%
% Options: Use if required
% eg: \author[1,3]{Author Name}[type=editor,
%       style=chinese,
%       auid=000,
%       bioid=1,
%       prefix=Sir,
%       orcid=0000-0000-0000-0000,
%       facebook=<facebook id>,
%       twitter=<twitter id>,
%       linkedin=<linkedin id>,
%       gplus=<gplus id>]

\author[1]{Han Zhang}[orcid=0000-0002-3905-0633]

% Corresponding author indication
\cormark[1]

%% Footnote of the first author
%\fnmark[1]

% Email id of the first author
\ead{zhanghan_tc@sjtu.edu.cn}

% URL of the first author
%\ead[url]{<URL>}

% Credit authorship
% eg: \credit{Conceptualization of this study, Methodology, Software}
%\credit{<Credit authorship details>}

% Address/affiliation
\affiliation[1]{organization={Department of Automation, Shanghai Jiao Tong Univerisity},
            addressline={No. 800, Dongchuan Rd.}, 
            city={Shanghai},
%          citysep={}, % Uncomment if no comma needed between city and postcode
            postcode={200240}, 
           % state={a},
            country={China}}

\author[2]{Axel Ringh}[orcid=0000-0002-9778-1426]

%% Footnote of the second author
%\fnmark[2]

% Email id of the second author
\ead{axelri@chalmers.se}

% URL of the second author
%\ead[url]{}

% Credit authorship
%\credit{}

% Address/affiliation
\affiliation[2]{organization={Department of Mathematical Sciences, Chalmers University of Technology and the University of Gothenburg},
           % addressline={a}, 
            city={Gothenburg},
%          citysep={}, % Uncomment if no comma needed between city and postcode
            postcode={41296}, 
            %state={a},
            country={Sweden}}

% Corresponding author text
\cortext[1]{Corresponding author}

%% Footnote text
%\fntext[1]{}

% For a title note without a number/mark
%\nonumnote{}

% Here goes the abstract
\begin{abstract}
Inverse Optimal Control (IOC) is a powerful framework for learning a behaviour from observations of experts.
The framework aims to identify the underlying cost function that the observed optimal trajectories (the experts' behaviour)  are optimal with respect to.
In this work, we considered the case of identifying the cost and the feedback law from observed trajectories generated by an ``average cost per stage" linear quadratic regulator.
We show that identifying the cost is in general an ill-posed problem, and give necessary and sufficient conditions for non-identifiability.
Moreover, despite the fact that the problem is in general ill-posed, 
we construct an estimator for the cost function and show that the control gain corresponding to this estimator is a statistically consistent estimator for the true underlying control gain.
In fact, the constructed estimator is based on convex optimization, and hence the proved statistical consistency is also observed in practice.
We illustrate the latter by applying the method on a simulation example from rehabilitation robotics.
\end{abstract}

% Use if graphical abstract is present
%\begin{graphicalabstract}
%\includegraphics{}
%\end{graphicalabstract}

% Keywords
% Each keyword is seperated by \sep
\begin{keywords}
Inverse optimal control, System identification, Convex optimization, Semidefinite programming, Inverse reinforcement learning
\end{keywords}

\maketitle

% Main text
\section{Introduction}\label{sec:introduction}
In nature, many decision processes can be modelled as optimal control problems \cite{alexander1996optima} since agents such as animals tend to behave optimally according to some criteria. 
This is closely connected to the concept of reinforcement learning \cite{kaelbling1996reinforcement, sutton2018reinforcement, bertsekas2019reinforcement}, and has found successful applications in many areas such as robotics and autonomous driving.
Notably, provided that its dynamics is a known a priori, the optimal behaviour of an agent is solely determined by its objective function. Hence, one of the fundamental problems in optimal control (and in reinforcement learning as well) is to design a good objective function (pay-off) so that the agent can achieve a good ``performance''.
The measure of ``performance'' should not solely be understood as a measure in terms of the objective function, but rather as an assessment of the generated behaviour in the actual environment.
The connection between the objective function design and ``performance'' is rather implicit, and it is usually hard to know which objective function gives a good ``performance''.
Therefore, the objective function designing process usually relies on ``trial-and-error'' and ``tuning".
One option to avoid this process is to model the objective function that an expert use when completing the same task, and then instead use the identified expert's objective function.
Moreover, once the expert's objective function is identified, we are also able to predict the expert's future behaviour.
First proposed in \cite{kalman1964linear}, such modelling process is known as Inverse Optimal Control (IOC). The topic has received plenty of interest in recent years, in particular under the name inverse reinforcement learning \cite{adams2022survey}.
Formally speaking, an IOC problem assumes that the observed data is governed by an optimal control model structure whose parameters in the objective function remains unknown.
Consequently, the IOC algorithm reconstructs the corresponding unknown parameters in the objective function based on observed data;
such problem can be categorized as gray-box system identification \cite[p.~13]{ljung1999system}.

As one of the classical optimal controller designs, the IOC problem for linear quadratic regulators has been studied extensively in different settings.
In particular, \cite{zhang2019inverse, zhang2019inverseCDC, zhang2021inverse, zhang2022indefinite} consider the discrete-time finite horizon case.
However, if the expert's task completion time is long, or if the observation is sampled ``too finely", it is computationally challenging to reconstruct the quadratic objective function. This is because the control gain is time-varying and generated by the Riccati iteration in the finite time-horizon case, and hence the number of variables to be optimized is proportional to the time-horizon length.
This motivates us to consider the IOC problem for ``averaged cost per stage" linear quadratic regulator, with both process noise and observation noise, which can be seen as an approximation of linear quadratic optimal control when the time-horizon length is long enough.

The IOC problem for infinite horizon linear quadratic regulators have been considered in both continuous-time \cite{boyd1994linear, jameson1973inverse, fujii1987new} and discrete-time \cite{priess2014solutions}. In particular, \cite{boyd1994linear, jameson1973inverse, fujii1987new, priess2014solutions} all give necessary and sufficient conditions for the correspondence between a given control gain $K$ and the objective function parameters.
In particular, both \cite{boyd1994linear} and \cite{priess2014solutions} formulate the necessary and sufficient conditions in terms of Linear Matrix Inequalities (LMIs) that involves a given control gain $K$.
However, this means that the 
IOC problem must be solved in two steps: first the control gain $K$ need to be identified, and then the necessary and sufficient condition (that involves $K$) can be used to estimate the corresponding parameters in the quadratic objective function; this is true also for the methods in \cite{jameson1973inverse, fujii1987new}. Notably, the prior knowledge that ``the observed data is generated by a linear quadratic regulator" (or formally stated, the model structure) has not been fully utilized during the identification process of $K$.  Hence, from a practical perspective, we are motivated to directly use the observed data to identify the underlying cost function.
Such methods have been considered in the infinite-horizon case, both in continuous-time \cite{xue2021inverse_inf} and discrete-time \cite{xue2021inverse}, respectively. However, in both cases the algorithm uses observations of both state and control signal. In some cases, the latter might not be available. Moreover, in both works the observations are noise-free, and there is no process noise in the dynamics.

Indeed, the IOC problem for linear quadratic regulators is a special case of more general nonlinear IOC problems \cite{molloy2018finite, molloy2020online, keshavarz2011imputing}. These methods are mainly based on minimizing the violation of Pontryagin maximum principle (Karush-Kuhn-Tucker conditions). However, as proved by \cite{aswani2018inverse}, such methods are not statistical consistent, which means that they are not robust to process noise and measurement noise. Since both process noise and measurement noise are inevitable in applications, we are motivated to develop an IOC algorithm for linear quadratic regulators that is robust against noise. The idea is to utilize the structure of linear quadratic regulators in order to obtain stronger results in terms of statistical consistency.

In this work, we consider the IOC problem of ``average  per stage cost" linear quadratic regulators. In particular, the system dynamics involves process noise and the state-trajectory observation is also contaminated by noise.
The contributions are three-fold:
\begin{enumerate}
\item We analyze the identifiability problem of the IOC problem and we show that the parameters in the objective function are in general not identifiable, even for $R = I$. Moreover, necessary and sufficient conditions for non-identifiability are given.
\item We construct an IOC algorithm for ``average  per stage cost" linear quadratic regulators. In particular, the algorithm is based on a convex optimization, and the size of the optimization problem is independent of the data amount. 
\item We prove the IOC algorithm recovers an estimate of the underlying cost together with a statistically consistent estimate of the control gain. In particular, this property is also attained in practice since the algorithm is based on convex optimization and hence do not suffer from issues due to local minima.
\end{enumerate}

The outline of the paper is as follows: in Section~\ref{sec:problem_formulation_and_identifiability} we formally introduce the forward problem and the inverse problem, and given necessary and sufficient conditions for non-identifiability of the cost function. Section~\ref{sec:IOC_alg} contains the main results; there, we derive the IOC algorithm, and show that the derived estimator for the control gain is a statistically consistent estimator. In Section~\ref{sec:num_sim}, the method is illustrated on a simulation example from rehabilitation robotics.
Finally, Section~\ref{sec:conclusion} contains some conclusions.

\paragraph*{Notations}
In general, boldface letters like $\mfx$ denotes random variables, while $x$ denotes a realization of the same random variable. $\mE_{\mfx}$ denotes mathematical expectation with respect to the random variable $\mfx$. We use $\overset{p}\rightarrow$ to denote convergence in probability and ``$a.s.$" is short for ``almost surely". 
Moreover, given a sequence of random variables $\bm{r}_n$, we use the notation $\bm{x}_n=o_p(\bm{r}_n)$ to denote $\bm{x}_n = \bm{y}_n \bm{r}_n$, where $\bm{y}_n\overset{p}\rightarrow 0$ (see, e.g., \cite{van1998asymptotic}).
Furthermore, $\mR^{n \times m}$ denotes the set of $n \times m$ matrices with real-valued entries, and $\mS^n$, $\mS^n_+$, and $\mS^n_{++}$ denote the subsets of symmetric matrices, of symmetric positive semi-definite, and of symmetric strictly positive definite matrices, respectively, all of size $n \times n$.
In addition, for $P_1,P_2\in\mS^n$, $P_1\succeq P_2$ and $P_1\succ P_2$ mean that $P_1-P_2\in\mS^n_+$ and $P_1-P_2\in\mS^n_{++}$, respectively.

\section{Problem formulation and identifiability discussions}\label{sec:problem_formulation_and_identifiability}
Let $(\Omega, \mathcal{F},\mP)$ be the probability space that carries random vectors $\bar{\mfx}\in\mR^n$, $\{\mfw_t\in\mR^n\}_{t=1}^\infty$, $\{\mfv_t\in\mR^n\}_{t=1}^\infty$,  $\{\mfy_t\in\mR^n\}_{t=1}^\infty$ whose distributions are unknown.
$\bar{\mfx}$ is the initial condition, $\mfw_t$ is process noise, $\mfv_t$ is observations noise, and $\mfy_t = \mfx_t+\mfv_t$ are the observations.
Given a realization $\bar{x}$ of the random vector $\bar{\mfx}$, suppose that the expert's decisions $\{\mfu_t\}_{t=1}^\infty$ over time are governed by an ``averaged cost per stage" linear quadratic regulator \cite[Sec.~4.6.5]{bertsekas2011dynamic}, i.e., by 
\begin{subequations}\label{eq:forward_problem}
\begin{align}
\min_{\substack{\{\mfx_t\}_{t=1}^\infty , \\ \{\mfu_{t}\}_{t=1}^\infty}}
  \;
  & \; \lim_{N\rightarrow\infty} \frac{1}{N}\mE_{\mfw_{1:N}}\Big\{\sum_{t=1}^N \frac{1}{2}\Big[\mfx_t^T\Qtrue\mfx_t+\mfu_t^T\Rtrue\mfu_t \Big]\Big\}\label{eq:forward_problem_cost}\\
  \st
  & \; \mfx_{t+1} = A\mfx_{t} + B\mfu_{t}+\mfw_t, \quad t = 1,2,\ldots, \label{eq:forward_problem_dynamics} \\
  & \; \mfx_1 = \bar{x}, \label{eq:forward_problem_init_cond}
\end{align}
\end{subequations}
where $A \in \mR^{n \times n}$, $B \in \mR^{n \times m}$, $\Qtrue\in\mathbb{S}^n_+$, $\Rtrue\in\mathbb{S}^m_{++}$. 
We refer to problem \eqref{eq:forward_problem} as ``the forward problem".
For the sake of simplicity, we assume $\Rtrue = I$ throughout the paper.
Before we proceed, the following mild assumptions are made regarding the ``forward problem" \eqref{eq:forward_problem}.

\begin{assumption}[Properties of the system dynamics]\label{ass:controlability_and_full_rank}
The system \eqref{eq:forward_problem_dynamics} is controllable and sampled from a continuous-time system. More specifically, this means that $A=e^{\hat{A}\Delta t}$ is invertible, where $\Delta t$ is the sampling period and $\hat{A}$ is the system matrix of the corresponding continuous-time system. Furthermore, the system \eqref{eq:forward_problem_dynamics} is not over-actuated, i.e., $B$ has full-column rank.
\end{assumption}

\begin{assumption}[I.I.D.~random variables]\label{ass:iid}
The random vectors $\{\mfw_t\}_{t=1}^\infty$ and $\{\mfv_t\}_{t=1}^\infty$ are white and mutually independent. In addition, both $\mfw_t$ and $\mfv_t$ are independent and identically distributed (I.I.D.) over time $t$, with $\mE[\mfw_t]=\mE[\mfv_t]=0$, $\forall t$. Moreover, it holds for their covariances $\Sigma_w:=\cov(\mfw_t,\mfw_t)$ and $\Sigma_v:=\cov(\mfv_t,\mfv_t),\:\forall t$ that $\|\Sigma_w\|_F<\infty$, $\|\Sigma_v\|_F<\infty$.
\end{assumption}

Another standard assumption in linear-quadratic optimal control is the following (cf.~\cite[Sec.~4.6.5]{bertsekas2011dynamic}).

\begin{assumption}[Observability]\label{ass:observability}
There exists a $k \leq n$ and a $\Ctrue \in \mR^{k \times n}$ such that $\Qtrue=\Ctrue^T\Ctrue$, and such that the pair $(A,\Ctrue)$ is observable.
\end{assumption}

Notably, Assumption~\ref{ass:observability} guarantees the existence of a unique stabilizing solution $\Ptrue\in\mS^n_+$ to the Discrete-time Algebraic Riccati Equation (DARE)
\cite[Prop.~4.4.1]{bertsekas2005dynamic}
\begin{align}
\Ptrue = A^T\Ptrue A+\Qtrue-A^T\Ptrue B(B^T\Ptrue B+I)^{-1}B^T\Ptrue A,\label{eq:DARE}
\end{align}
while the optimal control to \eqref{eq:forward_problem} is given by $\mfu_t = \Ktrue\mfx_t$, and where the control gain $\Ktrue$ is given by
\begin{align}
\Ktrue = -(B^T\Ptrue B+I)^{-1}B^T\Ptrue A.\label{eq:ctrl_gain}
\end{align}
In fact, under Assumptions~\ref{ass:controlability_and_full_rank} and \ref{ass:observability}, the following Lemma shows that unique stabilizing solution $\Ptrue\in\mS^n_+$ is strictly positive definite; this will be useful later on in the analysis.

\begin{lemma}\label{lem:P_pd}
Under Assumption~\ref{ass:controlability_and_full_rank} and \ref{ass:observability}, the DARE \eqref{eq:DARE} admits a unique stabilizing solution $\Ptrue$ that is \emph{strictly} positive definite.
\end{lemma}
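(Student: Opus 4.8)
The plan is to exploit that, by Assumption~\ref{ass:observability} together with the cited \cite[Prop.~4.4.1]{bertsekas2005dynamic}, the existence and uniqueness of a stabilizing positive \emph{semi}definite solution $\Ptrue$ of the DARE \eqref{eq:DARE} is already granted; the only remaining task is to upgrade semidefiniteness to definiteness, i.e.\ to show $\ker\Ptrue=\{0\}$. I would argue by contradiction: suppose there is $x\neq 0$ with $\Ptrue x=0$, and aim to deduce that $x$ lies in the kernel of the observability matrix of $(A,\Ctrue)$, contradicting Assumption~\ref{ass:observability}.

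First I would rewrite the DARE in closed-loop ``Lyapunov'' form. Using the definition of $\Ktrue$ in \eqref{eq:ctrl_gain} and a completion-of-squares computation, one obtains
\begin{equation*}
\Ptrue = \Acl^T\Ptrue\Acl + \Qtrue + \Ktrue^T\Ktrue, \qquad \Acl := A + B\Ktrue,
\end{equation*}
where $\Acl$ is Schur stable because $\Ptrue$ is the \emph{stabilizing} solution. Iterating this identity (the series converges precisely because $\Acl$ is Schur) yields
\begin{equation*}
\Ptrue = \sum_{j=0}^\infty (\Acl^T)^j\big(\Qtrue + \Ktrue^T\Ktrue\big)\Acl^j .
\end{equation*}
Since $\Qtrue + \Ktrue^T\Ktrue\succeq 0$, every summand is positive semidefinite, so the scalar relation $x^T\Ptrue x=0$ writes as a sum of nonnegative terms $\big\|(\Qtrue+\Ktrue^T\Ktrue)^{1/2}\Acl^j x\big\|^2$ that must each vanish. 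Consequently $\Acl^j x\in\ker\Qtrue\cap\ker\Ktrue$ for every $j\ge 0$, i.e.\ both $\Ctrue\Acl^j x = 0$ (using $\Qtrue=\Ctrue^T\Ctrue$) and $\Ktrue\Acl^j x = 0$.

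The crux is to transfer these relations from the closed-loop matrix $\Acl$ to the open-loop matrix $A$ that appears in the observability pair. Here I would use $\Ktrue\Acl^j x = 0$: since $\Acl = A + B\Ktrue$, an easy induction gives $\Acl^{j}x = A^{j}x$ for all $j\ge 0$ (the base case is trivial, and $\Acl^{j+1}x = A\Acl^j x + B\Ktrue\Acl^j x = A\,A^j x$, using the induction hypothesis together with $\Ktrue\Acl^j x=0$). Therefore $\Ctrue A^j x = 0$ for all $j$, so the observability matrix $[\,\Ctrue^T,\ (\Ctrue A)^T,\ \dots,\ (\Ctrue A^{n-1})^T\,]^T$ annihilates $x$; by observability of $(A,\Ctrue)$ it has full column rank, forcing $x=0$, a contradiction. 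Hence $\Ptrue\succ 0$.

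I expect the main obstacle to be exactly this transfer step — ensuring the derived orthogonality holds with respect to the open-loop dynamics $A$ rather than $\Acl$ — and the observation $\Ktrue\Acl^j x = 0$ (which comes for free from the $\Ktrue^T\Ktrue$ term) is what makes it go through. An alternative, purely algebraic route avoids the series: working directly from \eqref{eq:DARE}, $\Ptrue x=0$ splits $0=x^T\Ptrue x$ into the nonnegative pieces $x^T\Qtrue x$ and $(Ax)^T\big[\Ptrue - \Ptrue B(B^T\Ptrue B + I)^{-1}B^T\Ptrue\big](Ax)$; a Woodbury-identity computation shows the bracketed matrix equals $\Ptrue^{1/2}(I+\Ptrue^{1/2}BB^T\Ptrue^{1/2})^{-1}\Ptrue^{1/2}$ and hence has the same kernel as $\Ptrue$, whence $\Ctrue x = 0$ and $\Ptrue A x = 0$, and one induces on $j$ as before. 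I note that neither route needs the invertibility of $A$ from Assumption~\ref{ass:controlability_and_full_rank}; observability is the assumption doing the real work in this lemma.
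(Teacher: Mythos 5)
Your proof is correct, but it takes a genuinely different route from the paper: the paper's entire proof is a one-line appeal to \cite[Thm.~2.1]{payne1973discrete}, with all details omitted, whereas you give a self-contained argument. Both start from the same point --- existence and uniqueness of the stabilizing solution $\Ptrue \in \mS^n_+$ via \cite[Prop.~4.4.1]{bertsekas2005dynamic} --- but you then upgrade semidefiniteness to definiteness by hand: the closed-loop Lyapunov identity $\Ptrue = \Acl^T\Ptrue\Acl + \Qtrue + \Ktrue^T\Ktrue$ (which follows from \eqref{eq:DARE} and \eqref{eq:ctrl_gain} by completing the square), the convergent series representation of $\Ptrue$ using Schur stability of $\Acl$, and the key transfer step $\Acl^j x = A^j x$ on $\ker\Ptrue$, which your induction via $\Ktrue\Acl^j x = 0$ handles correctly; this reduces $\Ptrue x = 0$ to a violation of observability of $(A,\Ctrue)$. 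The alternative Woodbury-based route you sketch is also sound, since $\Ptrue - \Ptrue B(B^T\Ptrue B + I)^{-1}B^T\Ptrue = \Ptrue^{1/2}(I+\Ptrue^{1/2}BB^T\Ptrue^{1/2})^{-1}\Ptrue^{1/2}$ indeed has the same kernel as $\Ptrue$. What each approach buys: the paper's citation is maximally brief but forces the reader to consult an external 1973 result; your argument is verifiable in place and makes explicit which hypothesis does the work --- your closing remark that observability, and not the invertibility of $A$ from Assumption~\ref{ass:controlability_and_full_rank}, is what forces strict definiteness (invertibility of $A$ being needed elsewhere, e.g.\ in Lemma~\ref{lem:A_cl_invertible}) is accurate and is information the paper's proof does not expose.
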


\begin{proof}
This follows by an application of \cite[Thm.~2.1]{payne1973discrete}; details are omitted for brevity.
\end{proof}

Now, assumed that we, as an external observer, are able to observe the expert system's optimal trajectory.
However, such observations would inevitably be contaminated by some measurement noise; this is modeled as having observations of the form
\begin{align}
\mfy_t = \mfx_t+\mfv_t,\quad t=1,2,\ldots,\label{eq:observation}
\end{align}
where the observation noise $\mfv_t$ is as in Assumption~\ref{ass:iid}.
Suppose we observe $M$ independent trials $\{y_t^i\}_{t=1}^N$,  for $i=1, \ldots, M$, of the expert's system. More precisely, let $\mfy_t^i$, $i=1, \ldots, M$ be I.I.D.~and distributed as $\mfy_t$; what we observe are the realizations $y_t^i$ of the random vectors $\mfy_t^i$, $t=1, \ldots, N$ and $i=1, \ldots, M$.
Based on the observation \eqref{eq:observation} and aforementioned assumptions,
the goal of the IOC problem corresponding to the ``forward problem" \eqref{eq:forward_problem} is to identify the underlying $\Qtrue$ using the observed optimal trajectory $\{y_t^i\}_{t=1}^N,\:i=1,\ldots,M$.

Nevertheless, it turns out that the problem of identifying $\Qtrue$ using the observed optimal trajectory $\{y_t^i\}_{t=1}^N,\:i=1,\ldots,M$ is in general an ill-posed inverse problem, i.e., that the model structure is in general not identifiable.
This is in sharp contrast to the finite-time horizon case (see \cite{zhang2019inverse,zhang2021inverse, zhang2022indefinite}).
To show this, first note that if we regard the initial value $\bar{\mfx}$, the process noise $\mfw_t$, and the measurement noise $\mfv_t$ as inputs, and if we regard the observation $\mfy_t$ at each time step as output, then the model structure $\mathcal{M}(\Qtrue)$ that defines the input-output relation is determined by 
\begin{align}
\mfy_t = (A_{cl}(\Qtrue))^{t-1}\bar{\mfx}+\sum_{k=1}^{t-1}(A_{cl}(\Qtrue))^{t-1-k}\mfw_k+\mfv_t,
\label{eq:model_structure_def}
\end{align}
where the closed-loop system matrix $A_{cl}(\Qtrue):=A+B\Ktrue$ depends implicitly on $\Qtrue\in\mS^n_+$ via \eqref{eq:DARE} and \eqref{eq:ctrl_gain}. 
Hence the question of identifiability is: does there exists a $Q \neq \Qtrue$, where $Q \in \mS^n_+$, such that $A_{cl}(Q)=A_{cl}(\Qtrue)$? 
Before we prove necessary and sufficient conditions for when the question is answered in the affirmative, we present the following Lemma that will be useful in the analysis to come.

\begin{lemma}\label{lem:A_cl_invertible}
Under Assumption~\ref{ass:controlability_and_full_rank} and \ref{ass:observability}, the closed-loop system $A_{cl}(Q)$ is invertible for all $Q\in\mS^n_+$.
\end{lemma}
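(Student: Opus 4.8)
The plan is to exhibit $\Acl(Q)$ as a product of invertible matrices, so that invertibility is immediate. First I would substitute the optimal gain formula \eqref{eq:ctrl_gain} (with $\Qtrue$ replaced by $Q$, and $\Ptrue$ by the corresponding DARE solution $P$) into $\Acl(Q) = A + BK$, which gives
\[
\Acl(Q) = \bigl[\,I - B(B^T P B + I)^{-1} B^T P\,\bigr]A ,
\]
where $P = P(Q)\in\mS^n_+$ is the unique stabilizing solution of the DARE \eqref{eq:DARE} associated with $Q$. Here I would note that such a $P$ exists and is positive \emph{semi}-definite for every $Q\in\mS^n_+$ as a consequence of the controllability (hence stabilizability) of $(A,B)$ in Assumption~\ref{ass:controlability_and_full_rank}; only $P\succeq 0$ is needed below. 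Since $A$ is invertible by the same assumption, it suffices to prove that the bracketed factor is invertible.

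Next I would recognize the bracketed factor as $(I + B B^T P)^{-1}$, a form of the matrix inversion (Woodbury) lemma. Concretely, I would verify by direct expansion that
\[
(I + B B^T P)\bigl[\,I - B(B^T P B + I)^{-1} B^T P\,\bigr] = I ,
\]
the only nontrivial cancellation being the push-through identity $\bigl[(B^T P B + I)^{-1} + B^T P B (B^T P B + I)^{-1}\bigr] = (I + B^T P B)(B^T P B + I)^{-1} = I$. Consequently, provided $I + B B^T P$ is invertible, one obtains $\Acl(Q) = (I + B B^T P)^{-1} A$, a product of two invertible matrices.

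It then remains to show that $I + B B^T P$ is invertible, and this is the step I expect to carry the actual content, though it is short. Suppose $(I + B B^T P)v = 0$ for some $v$; then $v = -B B^T P v$, and left-multiplying by $v^T P$ yields $v^T P v = -\,v^T P B B^T P v = -\,\|B^T P v\|_2^2$. Since $P \succeq 0$ the left-hand side is nonnegative while the right-hand side is nonpositive, so both vanish; in particular $B^T P v = 0$, whence $v = -B(B^T P v) = 0$. Thus $I + B B^T P$ has trivial kernel and is invertible, and combined with the invertibility of $A$ this shows $\Acl(Q) = (I + B B^T P)^{-1}A$ is invertible for every $Q\in\mS^n_+$. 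The main obstacle is really only the bookkeeping of the algebraic identity together with the care needed to guarantee that $P$ is well defined and positive semi-definite for \emph{all} $Q\in\mS^n_+$ (not just for $\Qtrue$), which is where stabilizability from Assumption~\ref{ass:controlability_and_full_rank} enters; everything else is a routine verification.
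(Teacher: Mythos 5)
Your proof is correct and takes essentially the same route as the paper's: the paper omits the argument by deferring to the proof of Theorem~2.1 of its cited finite-horizon work, which is exactly the matrix-inversion-lemma factorization $\Acl(Q) = (I + BB^TP)^{-1}A$ that you verify, combined with the invertibility of $A$ from Assumption~\ref{ass:controlability_and_full_rank} (your kernel argument for $I + BB^TP$, using only $P \succeq 0$, is a harmless variant of the usual eigenvalue argument). One small caveat: your side remark that a \emph{stabilizing} solution of \eqref{eq:DARE} exists for every $Q\in\mS^n_+$ from stabilizability alone is not accurate in general (it can fail, e.g., for $Q=0$ when $A$ has unit-circle eigenvalues), but since your core argument needs only $P\succeq 0$ this does not affect the proof, and the paper's own statement glosses over the same well-definedness issue.
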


\begin{proof}
Using the Assumptions and \cite[Prop.~4.4.1]{bertsekas2005dynamic}, the DARE \eqref{eq:DARE} has a unique stabilizing solution and hence the closed-loop system matrix is well-defined.
The lemma can now be easily proved by following the same proof steps as the proof of \cite[Thm.~2.1]{zhang2019inverse},  and hence is omitted.
\end{proof}

The question of identifiability is answered in the following Proposition.

\begin{proposition}\label{prop:necessary_sufficient_cond_unidentifiable}
Under Assumption~\ref{ass:controlability_and_full_rank} and \ref{ass:observability}, the model structure $\mathcal{M}(\Qtrue)$ is not identifiable at $\Qtrue\in\mS^n_+$ if and only if there exists $\Delta P, \Delta Q\in\mS^n$ such that $\Delta P, \Delta Q\neq 0$ and such that the following holds:
\begin{subequations}\label{eq:suffcient_necessary_cond_unidentifiability}
\begin{align}
&B^T\Delta P = 0, \label{eq:B_transpose_delta_P_equals_zero}\\
&A^T\Delta PA-\Delta P+\Delta Q = 0, \label{eq:Qtrue_delta_P_minus_A_transpose_delta_P_A}\\
&\Ptrue+\Delta P\succeq 0,\label{eq:Ptrue_plus_delta_P}\\
&\Qtrue+\Delta Q\succeq 0,\label{eq:Qtrue_plus_delta_Q}
\end{align}
\end{subequations}
where $\Ptrue\in\mS^n_+$ is the unique stabilizing solution of DARE \eqref{eq:DARE} for $\Qtrue$.
\begin{proof}
We first prove the necessity. To this end, if $\mathcal{M}(\Qtrue)$ is not identifiable at $\Qtrue\in\mS^n_+$, it means that there exists a $Q\in\mS^n_+$ such that $Q\neq \Qtrue$, but for which the  DARE \eqref{eq:DARE} has a stabilizing solution $P\in\mS^n_+$ and such that $A_{cl}(Q)=A_{cl}(\Qtrue)$.
Denote $\Delta Q:=Q-\Qtrue$; this implies that $\Delta Q\neq 0$, that $\Delta Q\in\mS^n$, and that $\Qtrue+\Delta Q\succeq 0$, i.e., that \eqref{eq:Qtrue_plus_delta_Q} holds.
Similarly, denote $\Delta P := P-\Ptrue$, which means that $\Ptrue+\Delta P\succeq 0$, i.e., that \eqref{eq:Ptrue_plus_delta_P} holds.
Next, let $K(\Qtrue)$ and $K(Q)$ denote the control gains corresponding to $\Qtrue$ and $Q$, respectively, i.e., the control gains obtained by \eqref{eq:DARE} and \eqref{eq:ctrl_gain} with the corresponding cost matrix.
Under Assumption~\ref{ass:controlability_and_full_rank}, $B$ has full-column rank, from which it follows that 
\[
A+BK(Q) = A+BK(\Qtrue) \quad \Longleftrightarrow \quad K(Q) = K(\Qtrue).
\]
On the other hand, for any $\tilde{Q}\in\mS^n_+$ for which the  DARE \eqref{eq:DARE} has a stabilizing solution, for the corresponding solution $\tilde{P}$ and control gain $K(\tilde{Q})$  it holds that
\begin{align*}
& \; K(\tilde{Q}) = -(B^T \tilde{P} B+I)^{-1}B^T \tilde{P} A\\
\Rightarrow &\; (B^T \tilde{P}B+I)K(\tilde{Q})=-B^T\tilde{P}A\\
\Rightarrow &\; B^T\tilde{P}\underbrace{\left(A+BK(\tilde{Q})\right)}_{A_{cl}(\tilde{Q})} = -K(\tilde{Q}).
\end{align*}
Since by Lemma~\ref{lem:A_cl_invertible}, $A_{cl}(\tilde{Q})=A+BK(\tilde{Q})$ is invertible, it follows that $B^T \tilde{P} = -A_{cl}(\tilde{Q})^{-1}K(\tilde{Q})$ for all such $\tilde{Q}$. Therefore, since $A_{cl}(Q)=A_{cl}(\Qtrue)$ and $K(Q)=K(\Qtrue)$, for $P$ and $\Ptrue$ we have that
\begin{align}
B^TP = B^T\Ptrue\implies B^T\underbrace{(P-\Ptrue)}_{\Delta P} = 0,\label{eq:B_transpose_delta_P}
\end{align}
i.e., that \eqref{eq:B_transpose_delta_P_equals_zero} holds.
Moreover, since both $(\Ptrue,\Qtrue)$ and $(P=\Ptrue+\Delta P,Q=\Qtrue+\Delta Q)$ satisfy DARE \eqref{eq:DARE}, in view of \eqref{eq:B_transpose_delta_P}, we have that
\begin{align}
&\Ptrue+\Delta P = A^T(\Ptrue+\Delta P)A+A^T(\Ptrue+\Delta P)B\nonumber\\
&\left[B^T(\Ptrue+\Delta P)B+I\right]^{-1}B^T(\Ptrue+\Delta P)A +(\Qtrue+\Delta Q)\label{eq:perturbed_DARE}\\
&\implies \Delta P = A^T\Delta PA+\Delta Q\nonumber,
\end{align}
i.e., that \eqref{eq:Qtrue_delta_P_minus_A_transpose_delta_P_A} holds. 
This in turn implies that $\Delta P\neq 0$, otherwise by \eqref{eq:Qtrue_delta_P_minus_A_transpose_delta_P_A} it would contradict $\Delta Q\neq 0$.
This shows that all equations in \eqref{eq:suffcient_necessary_cond_unidentifiability} are satisfied by the non-zero difference matrices $\Delta P$ and $\Delta Q$. Hence the necessity is proved.

To prove the sufficiency, suppose that \eqref{eq:suffcient_necessary_cond_unidentifiability} holds. 
Since $(\Ptrue,\Qtrue)$ satisfies \eqref{eq:DARE}, and \eqref{eq:B_transpose_delta_P_equals_zero} and \eqref{eq:Qtrue_delta_P_minus_A_transpose_delta_P_A} hold, it is easy to see that \eqref{eq:perturbed_DARE} holds, which means that $(P=\Ptrue+\Delta P,Q=\Qtrue+\Delta Q)$ also satisfies DARE \eqref{eq:DARE}.
This means that we can generate an optimal control gain $K(Q=\Qtrue+\Delta Q)$ of \eqref{eq:forward_problem} with the underlying parameter $Q=\Qtrue+\Delta Q\in\mS^n_+$ by 
\begin{align*}
&K(Q) = -\left(B^T(\Ptrue+\Delta P)B+I\right)^{-1}B^T(\Ptrue+\Delta P)A\\
&=-(B^T\Ptrue B+I)^{-1}B^T\Ptrue A = K(\Qtrue), 
\end{align*}
where the second equality follows since \eqref{eq:B_transpose_delta_P_equals_zero} holds.
Since $\Qtrue$ and $Q$ generate the same control gain, they also generate the same closed-loop system. Moreover, since $\Delta Q \neq 0$, it must hold that $\Qtrue\neq Q$, and hence the model structure $\mathcal{M}(\bar{Q})$ is not identifiable at $\bar{Q}$.
\end{proof}
\end{proposition}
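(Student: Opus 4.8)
The plan is to establish both implications from two algebraic identities that follow from the control-gain formula \eqref{eq:ctrl_gain}. For any cost matrix $\tilde Q\in\mS^n_+$ whose DARE admits a stabilizing solution $\tilde P$, rearranging \eqref{eq:ctrl_gain} gives $(B^T\tilde PB+I)K(\tilde Q)=-B^T\tilde PA$, whence $K(\tilde Q)=-B^T\tilde P(A+BK(\tilde Q))=-B^T\tilde P A_{cl}(\tilde Q)$; since $A_{cl}(\tilde Q)$ is invertible by Lemma~\ref{lem:A_cl_invertible}, this shows $B^T\tilde P=-K(\tilde Q)A_{cl}(\tilde Q)^{-1}$, so that $B^T\tilde P$ is determined by $A_{cl}(\tilde Q)$ and $K(\tilde Q)$ alone. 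Substituting \eqref{eq:ctrl_gain} back into \eqref{eq:DARE} moreover collapses the quadratic term and yields the convenient closed-loop form $\tilde P=A^T\tilde PA_{cl}(\tilde Q)+\tilde Q$.

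For the necessity direction, I would start from a witness $Q\neq\Qtrue$ in $\mS^n_+$ with $A_{cl}(Q)=A_{cl}(\Qtrue)$ and set $\Delta Q:=Q-\Qtrue$, $\Delta P:=P-\Ptrue$, where $P$ is the stabilizing solution for $Q$. Conditions \eqref{eq:Ptrue_plus_delta_P} and \eqref{eq:Qtrue_plus_delta_Q} hold immediately since $P,Q\in\mS^n_+$, and $\Delta Q\neq0$ by construction. Because $B$ has full column rank (Assumption~\ref{ass:controlability_and_full_rank}), equal closed-loop matrices force $K(Q)=K(\Qtrue)$; feeding this, together with $A_{cl}(Q)=A_{cl}(\Qtrue)$, into the identity $B^T\tilde P=-K(\tilde Q)A_{cl}(\tilde Q)^{-1}$ gives $B^TP=B^T\Ptrue$, i.e.\ \eqref{eq:B_transpose_delta_P_equals_zero}. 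Subtracting the two closed-loop DAREs then gives $\Delta P=A^T\Delta P A_{cl}(\Qtrue)+\Delta Q$, and using $B^T\Delta P=0$ (hence $\Delta PB=0$ by symmetry) to replace $A_{cl}(\Qtrue)$ by $A$ in $A^T\Delta P A_{cl}(\Qtrue)=A^T\Delta P A$ yields \eqref{eq:Qtrue_delta_P_minus_A_transpose_delta_P_A}. Finally $\Delta P\neq0$, since $\Delta P=0$ would force $\Delta Q=0$ through \eqref{eq:Qtrue_delta_P_minus_A_transpose_delta_P_A}, contradicting $\Delta Q\neq0$.

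For the sufficiency direction, I would assume \eqref{eq:suffcient_necessary_cond_unidentifiability} and set $P:=\Ptrue+\Delta P$, $Q:=\Qtrue+\Delta Q$; conditions \eqref{eq:Ptrue_plus_delta_P} and \eqref{eq:Qtrue_plus_delta_Q} guarantee $P,Q\in\mS^n_+$. Since \eqref{eq:B_transpose_delta_P_equals_zero} gives $B^TP=B^T\Ptrue$ and $PB=\Ptrue B$, every factor of the form $B^T(\,\cdot\,)$ in the quadratic term of \eqref{eq:DARE} is unchanged when $\Ptrue$ is replaced by $P$, so the quadratic terms for $(P,Q)$ and $(\Ptrue,\Qtrue)$ coincide; subtracting the DARE for $(\Ptrue,\Qtrue)$ then reduces the DARE for $(P,Q)$ exactly to \eqref{eq:Qtrue_delta_P_minus_A_transpose_delta_P_A}, which holds by assumption, so $(P,Q)$ satisfies \eqref{eq:DARE}. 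The control gain \eqref{eq:ctrl_gain} evaluated at $(P,Q)$ equals $K(\Qtrue)$, again because $B^TP=B^T\Ptrue$, so $A+BK(\Qtrue)$ is Schur stable and $P$ is a (hence the) stabilizing solution for $Q$, giving $A_{cl}(Q)=A+BK(\Qtrue)=A_{cl}(\Qtrue)$. As $\Delta Q\neq0$ forces $Q\neq\Qtrue$, the model structure is not identifiable at $\Qtrue$.

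The main obstacle is the nonlinearity of the Riccati quadratic term in \eqref{eq:DARE}, which a priori couples $\Delta P$ and $\Delta Q$ in an intractable way. The crux that unlocks everything is the invertibility of $A_{cl}$ from Lemma~\ref{lem:A_cl_invertible}, which forces the single linear constraint $B^T\Delta P=0$; this simultaneously makes the Riccati term invariant under the perturbation and allows $A_{cl}$ to be swapped for $A$, thereby linearizing the DARE difference into the Lyapunov-type relation \eqref{eq:Qtrue_delta_P_minus_A_transpose_delta_P_A}. A secondary point requiring care in the sufficiency part is justifying that the constructed $P$ is genuinely the stabilizing solution of the DARE for $Q$, so that $A_{cl}(Q)$ is well defined; this follows from uniqueness of the stabilizing solution together with the fact that $K(\Qtrue)$ already stabilizes the closed loop.
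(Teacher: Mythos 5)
Your proof is correct and follows essentially the same route as the paper's: full column rank of $B$ forces $K(Q)=K(\Qtrue)$, invertibility of $A_{cl}$ from Lemma~\ref{lem:A_cl_invertible} yields $B^T\Delta P=0$, the DARE difference then linearizes to \eqref{eq:Qtrue_delta_P_minus_A_transpose_delta_P_A}, and sufficiency is obtained by reversing these steps. Your only departures are cosmetic refinements: in the necessity part you subtract the closed-loop form $\tilde P = A^T\tilde P A_{cl}(\tilde Q)+\tilde Q$ rather than the quadratic DAREs directly, and in the sufficiency part you explicitly verify (via uniqueness of the stabilizing solution and Schur stability of $A+BK(\Qtrue)$) that the constructed $P$ is genuinely the stabilizing solution for $Q$ — a point the paper leaves implicit.
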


To illustrate the result of the Proposition, next we give an example where $\Qtrue=\Ctrue^T\Ctrue\in\mS^n_+$ is not identifiable, even when $(A,B,\Ctrue)$ is a minimal realization and $A$ is invertible.
\begin{example}\label{example:non_identifiable}
Consider
\[
A = \begin{bmatrix} 1 & 0 \\ 1 & 1 \end{bmatrix}, \quad
B = \begin{bmatrix} 1 \\ 0 \end{bmatrix}, \quad
\Ctrue = \begin{bmatrix} 0 & 1\end{bmatrix}.
\]
Notably, $A$ is invertible, $(A,B)$ is controllable and $(A, \Ctrue)$ is observable. Next, let 
\[
\Qtrue = \Ctrue^T\Ctrue = \begin{bmatrix}
    0 & 0 \\ 0 & 1
\end{bmatrix}.
\]
By Lemma~\ref{lem:P_pd} there exists a unique positive definite stabilizing solution $\Ptrue$ to the corresponding DARE \eqref{eq:DARE}.%
\footnote{A numerical computation in Matlab gives $\Ptrue \approx \! \begin{bmatrix} 3.3306 & \!\!\!\!\! 2.0810 \\ 2.0810 & \!\!\!\!\! 2.6005 \end{bmatrix} \succ 0$.}
We now construct $\Delta P, \Delta Q\in\mS^n$ such that $\Delta P, \Delta Q\neq 0$ and such that \eqref{eq:suffcient_necessary_cond_unidentifiability} holds.
To this end, let 
\[
\Delta P = \begin{bmatrix} \Delta p_1& \Delta p_2 \\ \Delta p_2 & \Delta p_3 \end{bmatrix}, \quad
\Delta Q = \begin{bmatrix} \Delta q_1 & \Delta q_2 \\ \Delta q_2 & \Delta q_3 \end{bmatrix}.
\]
From \eqref{eq:B_transpose_delta_P_equals_zero}, it follows that $\Delta p_1= \Delta p_2 = 0$. Plugging this into \eqref{eq:Qtrue_delta_P_minus_A_transpose_delta_P_A} gives 
\begin{align*}
\begin{bmatrix} 0 & 0 \\ 0 & 0 \end{bmatrix} & \!\!=\!\!
\begin{bmatrix} 1 & 1 \\ 0 & 1 \end{bmatrix}
\begin{bmatrix} 0 & 0 \\ 0 & \Delta p_3 \end{bmatrix}
\begin{bmatrix} 1 & 0 \\ 1 & 1 \end{bmatrix}
\!\!-\!\! \begin{bmatrix} 0 & 0 \\ 0 & \Delta p_3 \end{bmatrix}
\!\!+\!\! \begin{bmatrix} \Delta q_1 & \Delta q_2\\ \Delta q_2& \Delta q_3 \end{bmatrix} \\
& \!\!=\!\! \begin{bmatrix} \Delta p_3 & \Delta p_3 \\ \Delta p_3 & 0 \end{bmatrix}
+ \begin{bmatrix} \Delta q_1 & \Delta q_2 \\ \Delta q_2 & \Delta q_3 \end{bmatrix}
\end{align*}
and thus a $(\Delta P,\Delta Q)$ that satisfy \eqref{eq:B_transpose_delta_P_equals_zero} and \eqref{eq:Qtrue_delta_P_minus_A_transpose_delta_P_A} must be of the form
\begin{align}
\Delta P = \begin{bmatrix} 0 & 0 \\ 0 & -\alpha \end{bmatrix}, \quad
\Delta Q = \begin{bmatrix} \alpha & \alpha \\ \alpha & 0 \end{bmatrix}, \label{eq:delta_P_delta_Q_construction}
\end{align}
for some $\alpha \in \mR$.
Since under Assumption~\ref{ass:controlability_and_full_rank} and \ref{ass:observability}, $\Ptrue$ is strictly positive definite, for small enough values of $\alpha > 0$, $\Ptrue + \Delta P$ is still strictly positive definite. Moreover,
\[
\Qtrue + \Delta Q = \begin{bmatrix}
    \alpha & \alpha \\ \alpha & 1
\end{bmatrix},
\]
and for any $\alpha \in (0,1)$, we note that $[(\Qtrue + \Delta Q)]_{11} = \alpha > 0$ and that for the Schur complement of $\Qtrue + \Delta Q$ with respect to the same element we have that
\[
(\Qtrue + \Delta Q) \setminus [(\Qtrue + \Delta Q)]_{11} = 1 - \alpha \alpha^{-1} \alpha = 1 - \alpha > 0.
\]
This means that  $\Qtrue + \Delta Q$ is positive semi-definite \cite[Thm~1.12]{horn2005basic}.
In conclusion, this means that for $\alpha \in (0,1)$ small enough, the construction \eqref{eq:delta_P_delta_Q_construction} satisfy the necessary and sufficient condition \eqref{eq:suffcient_necessary_cond_unidentifiability} for non-identifiability. 
\end{example}

Equipped with the necessary and sufficient condition \eqref{eq:suffcient_necessary_cond_unidentifiability}, we have the following corollaries.

\begin{corollary}\label{cor:m_equals_n_identifiable}
Under Assumption~\ref{ass:controlability_and_full_rank} and \ref{ass:observability}, if $m = n$, then the model structure $\mathcal{M}(\Qtrue)$  given by \eqref{eq:model_structure_def} is always identifiable.
\end{corollary}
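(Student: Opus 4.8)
The plan is to argue by contradiction using the necessary and sufficient condition for non-identifiability established in Proposition~\ref{prop:necessary_sufficient_cond_unidentifiable}. Suppose, toward a contradiction, that the model structure $\mathcal{M}(\Qtrue)$ is not identifiable at some $\Qtrue \in \mS^n_+$. Then there exist nonzero $\Delta P, \Delta Q \in \mS^n$ satisfying all four relations in \eqref{eq:suffcient_necessary_cond_unidentifiability}. I would focus only on \eqref{eq:B_transpose_delta_P_equals_zero} and \eqref{eq:Qtrue_delta_P_minus_A_transpose_delta_P_A}, since these two relations alone will already turn out to be incompatible with $\Delta P, \Delta Q \neq 0$ in the square case, so that the semidefiniteness constraints \eqref{eq:Ptrue_plus_delta_P} and \eqref{eq:Qtrue_plus_delta_Q} never need to be invoked.

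The key structural observation is that when $m = n$ the input matrix $B$ is square. By Assumption~\ref{ass:controlability_and_full_rank} the system is not over-actuated, so $B$ has full column rank; a square matrix of full column rank is invertible, and hence $B^T$ is invertible as well. Applying this to \eqref{eq:B_transpose_delta_P_equals_zero}, namely $B^T \Delta P = 0$, and multiplying on the left by $(B^T)^{-1}$, I would conclude that $\Delta P = 0$.

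Substituting $\Delta P = 0$ into the Lyapunov-type relation \eqref{eq:Qtrue_delta_P_minus_A_transpose_delta_P_A}, that is $A^T \Delta P A - \Delta P + \Delta Q = 0$, then immediately yields $\Delta Q = 0$. This contradicts the requirement $\Delta P, \Delta Q \neq 0$ in the non-identifiability characterization. Hence no admissible nonzero perturbation pair $(\Delta P, \Delta Q)$ can exist, and by Proposition~\ref{prop:necessary_sufficient_cond_unidentifiable} the model structure $\mathcal{M}(\Qtrue)$ is identifiable at every $\Qtrue \in \mS^n_+$.

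As for the difficulty, there is essentially no analytical obstacle in this corollary: the entire argument collapses once one recognizes that a full-column-rank square matrix is invertible, which forces the first defining equation of non-identifiability to kill $\Delta P$ outright. The only points requiring care are to invoke the correct part of Assumption~\ref{ass:controlability_and_full_rank} (full column rank of $B$, rather than controllability) and to note explicitly that the positive semidefiniteness constraints play no role here, so that the conclusion holds for all $\Qtrue \in \mS^n_+$ without any smallness or genericity caveat.
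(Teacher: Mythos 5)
Your proposal is correct and follows essentially the same route as the paper's own proof: invertibility of the square, full-rank $B$ forces $\Delta P = 0$ from \eqref{eq:B_transpose_delta_P_equals_zero}, and then \eqref{eq:Qtrue_delta_P_minus_A_transpose_delta_P_A} forces $\Delta Q = 0$, contradicting the non-identifiability characterization of Proposition~\ref{prop:necessary_sufficient_cond_unidentifiable}. The paper states this more tersely ("the result follows" after $\Delta P = 0$), so your explicit final step and the remark that the semidefiniteness constraints \eqref{eq:Ptrue_plus_delta_P}--\eqref{eq:Qtrue_plus_delta_Q} are never needed are just a fuller write-up of the same argument.
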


\begin{proof}
In this case, $B$ is a square matrix, and since it by Assumption~\ref{ass:controlability_and_full_rank} is full rank, it is invertible. The only $\Delta P$ that satisfies \eqref{eq:B_transpose_delta_P_equals_zero} is thus $\Delta P = 0$, and the result follows.
\end{proof}

\begin{corollary}\label{cor:strictly_positive_def_non_identifiable}
Under Assumption~\ref{ass:controlability_and_full_rank} and \ref{ass:observability}, if $n>m$, then the model structure $\mathcal{M}(\Qtrue)$ given by \eqref{eq:model_structure_def} is not identifiable at any $\Qtrue\in\mS^n_{++}$.
\end{corollary}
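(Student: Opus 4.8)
The plan is to verify the sufficient condition of Proposition~\ref{prop:necessary_sufficient_cond_unidentifiable} by explicitly constructing a nonzero pair $(\Delta P, \Delta Q)$ satisfying \eqref{eq:suffcient_necessary_cond_unidentifiability}. The key observation is that $n > m$ together with $\rank B = m$ (Assumption~\ref{ass:controlability_and_full_rank}) guarantees that $\ker B^T = (\range B)^\perp$ has dimension $n - m \geq 1$, so there exists a nonzero $v \in \mR^n$ with $B^T v = 0$. First I would set $\Delta P = \epsilon v v^T$ for a scalar $\epsilon > 0$ to be fixed later, and $\Delta Q := \Delta P - A^T \Delta P A = \epsilon\bigl( v v^T - (A^T v)(A^T v)^T \bigr)$. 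By construction $\Delta P, \Delta Q \in \mS^n$, condition \eqref{eq:B_transpose_delta_P_equals_zero} holds because $B^T \Delta P = \epsilon (B^T v) v^T = 0$, and \eqref{eq:Qtrue_delta_P_minus_A_transpose_delta_P_A} holds by the very definition of $\Delta Q$.

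The main obstacle is to ensure that both difference matrices are nonzero, and in particular that $\Delta Q \neq 0$. Writing $w := A^T v$, which is nonzero since $A$ (and hence $A^T$) is invertible by Assumption~\ref{ass:controlability_and_full_rank}, one has $\Delta Q = \epsilon(vv^T - ww^T)$; as two nonzero rank-one symmetric matrices $vv^T$ and $ww^T$ coincide only when $w = \pm v$, it suffices to rule out $A^T v = \pm v$. This is exactly where controllability enters, via the Popov-Belevitch-Hautus (PBH) test: a nonzero $v$ with $B^T v = 0$ and $A^T v = \lambda v$ would be a left eigenvector of $A$ orthogonal to the range of $B$, contradicting controllability of $(A,B)$. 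Hence no nonzero $v \in \ker B^T$ is an eigenvector of $A^T$ for any eigenvalue, so $w \neq \pm v$ and therefore $\Delta Q \neq 0$; and $\Delta P = \epsilon vv^T \neq 0$ trivially.

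It then remains to satisfy the two semidefiniteness constraints \eqref{eq:Ptrue_plus_delta_P} and \eqref{eq:Qtrue_plus_delta_Q}. By Lemma~\ref{lem:P_pd} we have $\Ptrue \succ 0$, and by the hypothesis of the corollary $\Qtrue \in \mS^n_{++}$, i.e.\ $\Qtrue \succ 0$. Since both $\Delta P$ and $\Delta Q$ depend linearly on $\epsilon$ and the cone $\mS^n_{++}$ is open, I would choose $\epsilon > 0$ small enough that $\Ptrue + \Delta P \succ 0$ and $\Qtrue + \Delta Q \succ 0$, which in particular yields \eqref{eq:Ptrue_plus_delta_P} and \eqref{eq:Qtrue_plus_delta_Q}. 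This is precisely the point where strict positive definiteness of $\Qtrue$ is needed, because $\Delta Q$ is in general indefinite. With all four conditions in \eqref{eq:suffcient_necessary_cond_unidentifiability} verified for nonzero $\Delta P, \Delta Q$, Proposition~\ref{prop:necessary_sufficient_cond_unidentifiable} gives that $\mathcal{M}(\Qtrue)$ is not identifiable at $\Qtrue$; and since $\Qtrue \in \mS^n_{++}$ was arbitrary, the claim follows.
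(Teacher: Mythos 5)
Your proposal is correct and follows essentially the same route as the paper: choose a nonzero $v\in\ker B^T$ (possible since $n>m$), set $\Delta P$ proportional to $vv^T$, define $\Delta Q$ via \eqref{eq:Qtrue_delta_P_minus_A_transpose_delta_P_A}, rule out $\Delta Q=0$ by controllability, and use $\Ptrue,\Qtrue\succ 0$ to scale the perturbation into the semidefinite constraints. The only (welcome) refinement is that you explicitly handle the case $A^Tv=-v$ as well as $A^Tv=v$ via the PBH test, whereas the paper's Kalman-rank argument states only the $+v$ case, though it covers both signs equally.
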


\begin{proof}
Since $\Qtrue\in\mS^n_{++}$, there exists a unique invertible $\Ctrue\in\mR^{n\times n}$ such that $\Qtrue = \Ctrue^T\Ctrue$ by Cholesky factorization.
Hence $(A,\Ctrue)$ is observable. By Lemma~\ref{lem:P_pd}, the unique stabilizing solution $\Ptrue$ of DARE \eqref{eq:DARE} that corresponds to $\Qtrue$ is \emph{strictly} positive definite.
On the other hand, since $n>m$, the dimension of $\ker(B^T)$ is strictly larger than zero and hence there exists a $v \neq 0$ such that $v\in\ker(B^T)$.
For such a $v$, let $\Delta P = \lambda vv^T \neq 0$, and let $\Delta Q = \lambda(vv^T-A^Tvv^TA)$ according to \eqref{eq:Qtrue_delta_P_minus_A_transpose_delta_P_A}. If $\Delta Q = 0$, then $A^Tv = v$. However, that would mean that $B^T(A^T)^tv = 0$ for all $t = 0, \ldots, n-1$, which is not possible since $(A, B)$ is controllable by Assumption~\ref{ass:controlability_and_full_rank}. Therefore, $\Delta Q \neq 0$.
Since both $\Qtrue$ and $\Ptrue$ are strictly positive definite, we can choose $\lambda$ small enough to let \eqref{eq:Ptrue_plus_delta_P} and \eqref{eq:Qtrue_plus_delta_Q} hold. With such a construction, the necessary and sufficient conditions for non-identifiability \eqref{eq:suffcient_necessary_cond_unidentifiability} are all satisfied, and the statement is proved.
\end{proof}

From the discussion above, we know that the model structure $\mathcal{M}(\Qtrue)$ is not identifiable at any strictly positive definite $\Qtrue$. What's more, from Example \ref{example:non_identifiable}, we see that the model structure also may be non-identifiable at positive semi-definite $\Qtrue$. 
Nevertheless, for the purpose ``predicting the agent’s behaviour", it is enough to provide a good estimate on the ``true" control gain $K(\Qtrue)$ and hence the closed-loop system matrix $A_{cl}(\Qtrue)$.
From this perspective, it is acceptable if we can identify a matrix $Q\in\mS^n_+$ that corresponds to the ``true" control gain $K(\Qtrue)$ even though $Q \neq \Qtrue$; in the case of non-identifiability, this is indeed the best that one can do.
Notably, the key issue is how to do the identification under the model structure $\mathcal{M}(\Qtrue)$, that is, using the prior knowledge that ``the observed data is generated by an average cost per stage linear quadratic regulator". By using this prior information, the IOC method should provide a more accurate and efficient estimate of the closed-loop system compared to directly identifying the closed-loop system matrix $A_{cl}(\Qtrue)$ without specifying the model structure.

To this end, we summarize the IOC problem that is considered in this paper as follows.
\begin{problem}\label{pro:IOC}
Suppose that the observed expert system is governed by ``averaged cost per stage" linear quadratic regulator \eqref{eq:forward_problem} and that Assumption~\ref{ass:controlability_and_full_rank}, \ref{ass:iid} and \ref{ass:observability} hold. Let the dynamics $(A,B)$ as well as the covariances $\Sigma_w$ and $\Sigma_v$ be a priori known.
Given the optimal state trajectory measurements $\{y_t\}$ that are realizations of the observations $\{\mfy_t\}$ as in \eqref{eq:observation}, estimate the control gain $K(\Qtrue)$ that corresponds to the $\Qtrue\in\mS^n_+$ in the objective function \eqref{eq:forward_problem_cost}.
\end{problem}

\section{IOC algorithm}\label{sec:IOC_alg}
In this section, we construct the IOC algorithm for the averaged cost per stage linear quadratic regulator. Moreover, we also prove that the constructed IOC algorithm is statistically consistent in the sense of the corresponding control gain converges. Namely, the control gain $K(Q_M^*)$ corresponding to the estimate $Q_M^*$ obtained using $M$ observed trajectories converge in probability to the ``true" control gain $K(\Qtrue)$ as the amount of data, $M$, tends to infinity. Furthermore, the constructed IOC algorithm is based on convex optimization, meaning that all optimizers are global ones. This alleviates the issue of local minima, and thus ensures that the theoretical statistical consistency is actually attained in practice.

\subsection{Construction and empirical approximation}
Since the corresponding optimal control gain $K(\Qtrue)$ is specified by \eqref{eq:ctrl_gain} and \eqref{eq:DARE}, it make sense to try minimize the ``violation" of the DARE in order to reconstruct the corresponding control gain. Here, we first explain this intuition behind the algorithm; rigorous analysis of the constructed algorithm is presented in the next subsection.

To this end, since the goal is to find $P,Q\in\mS^n_+$ that corresponds to the ``true" control gain $K(\Qtrue)$, they must satisfy the DARE \eqref{eq:DARE}.
Notably, the DARE \eqref{eq:DARE} is not linear with respect to $P$ and $Q$, and hence not convex as a constraint in an optimization problem. 
However, for $P, Q\in\mS^n_+$, $B^TPB+I$ is strictly positive definite and hence invertible. Further, suppose $P$ and $Q$ satisfy
\begin{align}
A^TP A\! -\! P\! +\!Q\! -\! A^TP B(B^TP B+I)^{-1}B^TP A\succeq 0, \label{eq:relaxed_DARE_inequality}
\end{align}
where we relax the DARE \eqref{eq:DARE} from an equality to an inequality in the cone $\mS^n_+$.
The goal is that the above inequality holds with equality, so that the ``violation" of the DARE \eqref{eq:DARE} is minimized, namely, zero.
In addition, note that $A^TP A-P+Q-A^TP B(B^TP B+I)^{-1}B^TP A$ is actually the Schur complement $H(Q,P)\backslash (B^TP B+I)$, where
\begin{align}\label{eq:def_H}
H (Q,P)= 
\begin{bmatrix}
B^TP B+I &B^TP A\\
A^TP B &A^TP A+Q-P
\end{bmatrix}.
\end{align}
$B^TPB+I$ is strictly positive definite since $P \in\mS^n_ +$,  so by \cite[Thm~1.12]{horn2005basic}, \eqref{eq:relaxed_DARE_inequality} holds if and only if $H(Q,P)\succeq 0$.
On the other hand, in view of \eqref{eq:ctrl_gain}, we can rewrite the inequality \eqref{eq:relaxed_DARE_inequality} as
\begin{align*}
(A\!+\!BK(Q))^T\Ptrue (A\!+\!BK(Q))\!+\!Q+K(Q)^TK(Q)\!-\!P\!\succeq\! 0
\end{align*}
If we pre- and post-multiply with the optimal state $\mfx_t$ at time step $t$, add $\tr(P\Sigma_w)$ and take expectation with respect to $\mfx_t$ on both hand sides of the above inequality, we have
\begin{align*}
&\mE_{\mfx_t}\Big[\mfx_t^T(A\!+\!BK(Q))^T P (A\!+\!BK(Q))\mfx_t\!+\!\mfx_t^TQ\mfx_t\\
&+\mfx_t^TK(Q)^TK(Q)\mfx_t\!-\!\mfx_t^TP\mfx_t\Big]\!+\tr(P\Sigma_w)\ge \tr(P\Sigma_w).
\end{align*}
Since, by Assumption~\ref{ass:iid}, $\mfw_t$ is I.I.D., zero-mean, and independent of $\mfx_t$,
using the fact that $\tr(P\Sigma_w) = \tr(P\mE_{\mfw_t}[\mfw_t\mfw_t^T]) = \mE_{\mfw_t}[\mfw_t^TP\mfw_t]$,
 we can further write the above inequality as
\begin{align*}
&\mE_{\mfx_t}\Big[\mE_{\mfw_t}\left[\left(\left(A\!+\!BK(Q)\right)\mfx_t\!+\!\mfw_t\right)^TP \left(\left(A\!+\!BK(Q)\right)\mfx_t\!+\!\mfw_t\right)\right]\!\!\\
&+\!\mfx_t^T\Qtrue\mfx_t\!+\mfx_t^TK(Q)^TK(Q)\mfx_t\!-\!\mfx_t^TP\mfx_t\Big]\ge \tr(P\Sigma_w),
\end{align*}
In addition, if $P,Q\in\mS^n_+$ satisfies DARE \eqref{eq:DARE} and $Q$ is such that $K(Q)=K(\Qtrue)$, in view of the fact $\mfu_t = K(\Qtrue)\mfx_t$, the above inequality is equivalent to
\begin{align*}
&\mE_{\mfx_{t+1}}[\mfx_{t+1}^TP\mfx_{t+1}]\! +\! \mE_{\mfx_t}[\mfx_t^TQ\mfx_t\!+\!\|\mfu_t\|^2-\mfx_t^TP\mfx_t]\! \ge\! \tr(P\Sigma_w)\\
&\Leftrightarrow\!\! \mE_{\mfx_{t+1}}[\mfx_{t+1}^TP\mfx_{t+1}]\! +\! \mE_{\mfx_t}[\mfx_t^TQ\mfx_t\!+\!\|\mfu_t\|^2\!-\!\mfx_t^TP\mfx_t]\! +\! \tr(Q\Sigma_v)\\
&\ge \tr(P\Sigma_w)+\tr(Q\Sigma_v)
\end{align*}
and the inequality in fact holds with equality.
Further, taking expectation with respect to $\mfv_{t:t+1}$ on both hand sides of the above inequality, in view of \eqref{eq:observation} and since, by Assumption~\ref{ass:iid}, $\mfv_t$ is I.I.D., zero-mean, and independent of $\mfx_t$, we can similarly write the above inequality as
\begin{align}
&\mE_{\mfv_{t:t+1}}[\mE_{\mfx_{t+1}}[\mfx_{t+1}^TP\mfx_{t+1}]\! +\! \mE_{\mfx_t}[\mfx_t^TQ\mfx_t\!+\!\|\mfu_t\|^2\!-\!\mfx_t^TP\mfx_t]\! +\! \tr(Q\Sigma_v)]\nonumber\\
&\ge \tr(P\Sigma_w)+\tr(Q\Sigma_v)\nonumber\\
&\Leftrightarrow \mE_{\mfv_{t+1}}[\mE_{\mfx_{t+1}}[(\mfx_{t+1}+\mfv_{t+1})^TP(\mfx_{t+1}+\mfv_{t+1})]]+\mE_{\mfx_t}[\|\mfu_t\|^2]\nonumber\\
&-\mE_{\mfv_t}[\mE_{\mfx_t}[(\mfx_t+\mfv_t)^TP(\mfx_t+\mfv_t)]]+\mE_{\mfv_t}[\mE_{\mfx_t}[(\mfx_t+\mfv_t)^TQ(\mfx_t+\mfv_t)]]\nonumber\\
&\ge \tr(P\Sigma_w)+\tr(Q\Sigma_v)\nonumber\\
&\Leftrightarrow \mE_{\mfy_{t:t+1}}[\mfy_{t+1}^TP\mfy_{t+1}-\mfy_t^T P\mfy_t+\mfy_t^TQ\mfy_t]+\mE_{\mfx_t}[\|\mfu_t\|^2]\nonumber\\
&\ge \tr(P\Sigma_w)+\tr(Q\Sigma_v),\label{eq:optimality_violation_at_t}
\end{align}
where we have use the fact that $\mE_{\mfv_{t+1}}[\mfv_{t+1}^TQ\mfv_{t+1}]=\mE_{\mfv_t}[\mfv_t^TQ\mfv_t]$ $=\tr(Q\mE_{\mfv_t}[\mfv_t\mfv_t^T])=\tr(Q\Sigma_v)$.
As a summary for the idea of the IOC algorithm construction, the inequality \eqref{eq:optimality_violation_at_t} holds for all $(Q,P)\in\mathscr{D}$ and all $t=1,2,\ldots$, where
\begin{align*}
\mathscr{D} = \{ & (Q,P) \in \mS_ +^n \times \mS_+^n \mid H(Q,P) \succeq 0, \text{ where $H(Q,P)$} \\
& \text{is given in } \eqref{eq:def_H}  \}.
\end{align*}
In particular, the inequality \eqref{eq:optimality_violation_at_t} should hold with equality if $(Q,P)$ satisfies the DARE \eqref{eq:DARE} and the data $\mfy_t$ is generated using the corresponding control gain $K(Q)$.
Hence we can define the ``violation" of the optimality at time step $t$ as a function $\psi_t:\mathscr{D}\mapsto \mR$, where
\begin{align}
\psi_t(Q,P) \!:=&\!\!\mE_{\mfy_{t:t+1}}\!\!\!\left[\mfy_{t+1}^TP\mfy_{t+1}\!-\!\mfy_t^T P\mfy_t\!+\!\mfy_t^TQ\mfy_t\right]\!-\!\tr(P\Sigma_w)\nonumber\\
&\!-\!\tr(Q\Sigma_v).\label{eq:psi_t}
\end{align}
Note that, compared to \eqref{eq:optimality_violation_at_t}, in \eqref{eq:psi_t} we have dropped $\mE_{\mfx_t}[\|\mfu_t\|^2]$ since it is just a constant.%
\footnote{Note that $\mfu_t$ should be interpreted in the same way as $\mfy_t$, namely as data, although we do not actually have access to it.} 
Intuitively, it now make sense to minimize the total ``violation" of the optimality for all time steps, namely,
\begin{equation}\label{eq:opt_prob}
\min_{Q,P}  \quad \Psi(Q,P) \quad \st  \quad (Q,P) \in \mathscr{D},
\end{equation}
where
\begin{align}
& \Psi(Q,P) := \sum_{t=1}^{N-1}\psi_t(Q,P)= \mE_{\mfy_{1:N}} \Big[ \mfy_N^T P \mfy_N  -  \mfy_1^T P \mfy_1  \nonumber\\
&\!\! +\!\! \sum_{t = 1}^{N-1} \mfy_t^T Q \mfy_t  \Big]\! -\! (N\!-\!1)\tr(Q \Sigma_{v})\!-\! (N\!-\!1)\tr(P \Sigma_{w}).\label{eq:Psi_obj_func}
\end{align}
However, the distribution of $\mfy_{1:N}$ is normally not known a priori in practice. Hence, we do not have the explicit expression for, and can hence not evaluate, the expectation in \eqref{eq:Psi_obj_func}. Consequently, we cannot solve \eqref{eq:opt_prob} directly. 
Nevertheless, given observations $y_t^i$, $i=1,\ldots, M$ that are realizations of random vectors $\mfy_t^i$, which are  I.I.D.~random vectors with the same distribution as $\mfy_t$, we can approximate the objective function \eqref{eq:Psi_obj_func} with the empirical approximation of the expectation. 
To this end, denote $\mfY_t = [\mfy_t^1,\cdots,\mfy_t^N]$. It is clear that
\begin{align*}
\sum_{i=1}^M \mfy_t^{iT}P\mfy_t^i \!=\!\! \sum_{i=1}^M\tr(P\mfy_t^i\mfy_t^{iT})\!=\!\tr(P\!\sum_{i=1}^M\mfy_t^i\mfy_t^{iT})\!=\!\tr(P\mfY_t\mfY_t^T),\\
\sum_{i=1}^M \mfy_t^{iT}Q\mfy_t^i \!=\!\! \sum_{i=1}^M\tr(Q\mfy_t^i\mfy_t^{iT})\!=\!\tr(Q\!\sum_{i=1}^M\mfy_t^i\mfy_t^{iT})\!=\!\tr(Q\mfY_t\mfY_t^T).
\end{align*}
Based on the above facts, we can empirically approximate the objective function $\Psi(\cdot)$ as
\begin{align}
&\Psi(\cdot)\approx \frac{1}{M}\sum_{i=1}^M \Big\{\mfy_N^{iT}P\mfy_N - \mfy_1^{iT}P\mfy_1^i +\sum_{t=1}^{N-1} \mfy_t^{iT}Q\mfy_t^i\Big\}\nonumber\\
&\qquad \! -\! (N\!-\!1)\tr(Q \Sigma_{v})\!-\! (N\!-\!1)\tr(P \Sigma_{w})\nonumber\\
&= \frac{1}{M}\tr( P \mfY_N \mfY_N^T)  - \frac{1}{M}\tr(P \mfY_1\mfY_1^T) \! +\! \frac{1}{M}\!\tr(Q \sum_{t = 1}^{N-1} \mfY_t\mfY_t^T)\nonumber\\
& \qquad \!-\! (N\!-\!1)\tr(Q \Sigma_{v}) \!-\! (N\!-\!1)\tr(P \Sigma_{w})\nonumber\\
&:=\Psi^{\mfY}_M(Q,P),\label{eq:Psi_empirical_approx}
\end{align}
However, this approximated objective function may not be bounded from below on $\mathscr{D}$ for some realizations $\{\mfY_t(\omega)\}_{t=1}^N = \{Y_t\}_{t=1}^N$. Therefore, we need to bound the feasible domain in order for the optimization problem to be well-posed. Thus, let $\varphi$ be some fixed (arbitrarily large) number,
and let 
\[
\mathscr{D}(\varphi) = \{ (Q,P) \in \mathscr{D} \mid \| Q \|_F \leq \varphi,\| P \|_F \leq \varphi  \}.
\]
Since any observed data is generated by a pair $(\Qtrue, \Ptrue)$ with finite norm, by selecting $\varphi$ sufficiently large we can guarantee that $(\Qtrue, \Ptrue) \in \mathscr{D}(\varphi)$.
Since it is expected that the empirical approximation \eqref{eq:Psi_empirical_approx} converge to the ``true" expectation \eqref{eq:Psi_obj_func}, and since $(\Qtrue,\Ptrue)$ is an optimizer to  \eqref{eq:Psi_obj_func} by construction, as long the domain bound $\varphi$ is sufficiently large, it should not (statistically) affect the control gain estimate; this will be made rigorous shortly, see Theorem~\ref{thm:statistical_consistency}.
As a summary, in practice the estimator reads
\begin{align}\label{eq:approx_opt_prob}
\min_{Q,P}  \quad \Psi^Y_M(Q,P) \quad \st & \quad (Q,P) \in \mathscr{D}(\varphi),
\end{align}
and we get an estimate $K(Q_M^\star,P_M^\star)$ by solving \eqref{eq:approx_opt_prob} with $\{\mfY_t\}_{t=1}^N$ evaluated at the observed realization $\{Y_t\}_{t=1}^N$, where $K(Q_M^\star,P_M^\star)$ is defined as \eqref{eq:ctrl_gain} and $(Q_M^\star,P_M^\star)$ is an optimizer of \eqref{eq:approx_opt_prob}. In particular, note that \eqref{eq:approx_opt_prob} is a convex optimization problem. Moreover, the matrices $\mfY_t \mfY_t^T,\forall t=1,\ldots,N$, are of dimension $n\times n$ and can be computed before solving the optimization problem \eqref{eq:approx_opt_prob}. Thus, the size of \eqref{eq:approx_opt_prob} does not grow with the increase of the number of observed trajectories $M$.

\subsection{Statistical analysis}
We now rigorously analyze the statistical properties of the constructed algorithm. In particular, we are interested in whether the constructed algorithm is truly statistically consistent in the sense of estimating the corresponding control gain $K(\Qtrue)$, and hence robust to the additive process noise $\mfw_t$ and measurement noise $\mfv_t$. 

Recall that in \eqref{eq:model_structure_def}, we see the initial value $\bar{\mfx}$ as the input of the model structure. To proceed, we first make the following Assumption regarding the initial value $\bar{\mfx}$ so that the model structure is persistently excited.
\begin{assumption}[Persistent excitation]\label{ass:persistent_excitation}
It holds that $\cov(\bar{\mfx},\bar{\mfx})\succ 0$ and $\mE[\|\bar{\mfx}\|^2]<\infty$.
\end{assumption}

Based on this, we would like to present the following Lemmas that is useful in the analysis to come. 
\begin{lemma}\label{lem:persistent_excitation}
Under Assumption \ref{ass:controlability_and_full_rank}, \ref{ass:observability} and \ref{ass:persistent_excitation}, it holds that $\mE_{\mfx_t}[\mfx_t\mfx_t^T]\succ 0$, and $\mE[\|\mfx_t\|^2]<\infty$, $\forall t>1$.
\end{lemma}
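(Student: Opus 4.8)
The plan is to propagate the closed-loop dynamics forward from the random initial condition and then read off the second-moment matrix $\mE_{\mfx_t}[\mfx_t\mfx_t^T]$ in closed form. Since the optimal control is $\mfu_t = \Ktrue\mfx_t$, the state obeys the linear recursion $\mfx_{t+1} = A_{cl}(\Qtrue)\mfx_t + \mfw_t$ with $\mfx_1 = \bar{\mfx}$, so that
\[
\mfx_t = A_{cl}(\Qtrue)^{t-1}\bar{\mfx} + \sum_{k=1}^{t-1}A_{cl}(\Qtrue)^{t-1-k}\mfw_k, \quad t\geq 1.
\]
First I would take outer products and expectations. Using that $\{\mfw_k\}$ are zero-mean, white, and independent of the initial condition $\bar{\mfx}$ (Assumption~\ref{ass:iid}), all cross terms vanish and the covariance of $\mfx_t$ splits into an initial-condition part and a noise part,
\[
\cov(\mfx_t,\mfx_t) = A_{cl}(\Qtrue)^{t-1}\cov(\bar{\mfx},\bar{\mfx})\big(A_{cl}(\Qtrue)^{t-1}\big)^T + \sum_{k=1}^{t-1}A_{cl}(\Qtrue)^{t-1-k}\Sigma_w\big(A_{cl}(\Qtrue)^{t-1-k}\big)^T.
\]

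The crux is the strict positive definiteness. By Lemma~\ref{lem:A_cl_invertible}, $A_{cl}(\Qtrue)$ is invertible, hence so is $A_{cl}(\Qtrue)^{t-1}$ for every $t$. Since $\cov(\bar{\mfx},\bar{\mfx})\succ 0$ by Assumption~\ref{ass:persistent_excitation}, the first summand is a congruence transform of a strictly positive definite matrix by an invertible matrix, and is therefore itself strictly positive definite; the remaining noise terms are each of the form $M\Sigma_w M^T\succeq 0$. Adding a positive semidefinite matrix to a strictly positive definite one keeps it strictly positive definite, so $\cov(\mfx_t,\mfx_t)\succ 0$. Finally, $\mE_{\mfx_t}[\mfx_t\mfx_t^T] = \cov(\mfx_t,\mfx_t) + \mE[\mfx_t]\mE[\mfx_t]^T \succeq \cov(\mfx_t,\mfx_t)\succ 0$, which gives the first claim (in fact the same argument already covers $t=1$).

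For the finiteness claim I would simply observe that $\mE[\|\mfx_t\|^2] = \tr\big(\mE_{\mfx_t}[\mfx_t\mfx_t^T]\big)$ is a \emph{finite} sum of $t$ traces, each a trace of a fixed finite matrix $A_{cl}(\Qtrue)^{j}\,(\cdot)\,(A_{cl}(\Qtrue)^{j})^T$ applied either to the initial second moment (finite since $\mE[\|\bar{\mfx}\|^2]<\infty$) or to $\Sigma_w$ (finite since $\tr(\Sigma_w)\le \sqrt{n}\,\|\Sigma_w\|_F<\infty$). Each term is finite, hence so is the sum for any fixed $t$. The main obstacle is the strict positive definiteness, and everything there rests on the invertibility of $A_{cl}(\Qtrue)$: without it the congruence could be rank-deficient and the initial excitation could be annihilated by the dynamics before time $t$, which is precisely why Lemma~\ref{lem:A_cl_invertible} is invoked. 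The noise term cannot be relied upon to supply definiteness on its own, since $\Sigma_w$ is only assumed positive semidefinite.
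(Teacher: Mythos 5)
Your proof is correct and takes essentially the same approach as the paper: the paper's own proof merely invokes the invertibility of $A_{cl}(\Qtrue)$ via Lemma~\ref{lem:A_cl_invertible} and then defers to the ``proof idea'' of a lemma in a cited earlier work, which is precisely the state-propagation, covariance-splitting, and congruence-by-an-invertible-matrix argument you wrote out. In other words, your proposal is a self-contained version of the argument the paper cites away.
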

\begin{proof}
Recall that under Assumption \ref{ass:controlability_and_full_rank} and \ref{ass:observability}, by Lemma \ref{lem:A_cl_invertible}, the closed-loop system $A_{cl}(\Qtrue)$ is invertible.
Then following the proof idea of \cite[Lemma~4.3]{zhang2022indefinite}, the statement follows.
\end{proof}

Next, we show that any globally optimal solution to the convex problem \eqref{eq:opt_prob} corresponds to the true control gain used to generate the data.

\begin{lemma}\label{lem:bounded_from_below}
Suppose that $(\Qtrue,\Ptrue)$ is the ``true" underlying parameters of the ``averaged cost per stage" linear quadratic regulator \eqref{eq:forward_problem}, with corresponding optimal states $\mfx_{1:N}$ and trajectory observation $\mfy_{1:N}$. Under Assumption~\ref{ass:controlability_and_full_rank}, \ref{ass:iid}, \ref{ass:observability}, the objective function \eqref{eq:Psi_obj_func} is bounded from below by $-\mE_{\mfx_t}[\|\mfu_t\|^2]$ on its domain $\mathscr{D}$. Furthermore, let $(Q^\star,P^\star)$ be any optimal solution of the convex optimization problem \eqref{eq:opt_prob}. Under Assumption~\ref{ass:persistent_excitation}, it holds that $K(Q^\star,P^\star) = K(\Qtrue,\Ptrue)$, where $K(\cdot)$ is defined by \eqref{eq:ctrl_gain}.
\end{lemma}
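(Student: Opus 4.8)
The plan is to reduce both claims to a single pointwise matrix inequality. First I would obtain a clean closed form for $\psi_t$. Substituting $\mfy_t = \mfx_t + \mfv_t$ into \eqref{eq:psi_t} and using that $\mfv_t$ is zero-mean, I.I.D., and independent of $\mfx_t$ (Assumption~\ref{ass:iid}), the measurement-noise contributions $\tr(P\Sigma_v)$ cancel, leaving an expression purely in terms of the state statistics. Inserting the data-generating recursion $\mfx_{t+1} = \Acl(\Qtrue)\mfx_t + \mfw_t$ and using $\mE_{\mfw_t}[\mfw_t^TP\mfw_t] = \tr(P\Sigma_w)$ together with independence of $\mfw_t$ and $\mfx_t$, the process-noise terms cancel as well, and I arrive at
\[
\psi_t(Q,P) = \mE_{\mfx_t}\bigl[\mfx_t^T\bigl(\Acl(\Qtrue)^TP\Acl(\Qtrue) - P + Q\bigr)\mfx_t\bigr].
\]
Since $\mfu_t = K(\Qtrue)\mfx_t$, this gives $\psi_t(Q,P) + \mE_{\mfx_t}[\|\mfu_t\|^2] = \mE_{\mfx_t}[\mfx_t^T M(Q,P)\mfx_t]$, where $M(Q,P) := \Acl(\Qtrue)^TP\Acl(\Qtrue) - P + Q + K(\Qtrue)^TK(\Qtrue)$.

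The crux is to show $M(Q,P) \succeq 0$ for every $(Q,P) \in \mathscr{D}$. The obstacle is that feasibility of $(Q,P)$ is expressed through the gain $K(Q,P) = -(B^TPB+I)^{-1}B^TPA$ that $(Q,P)$ itself induces, whereas $M$ is built from the true gain $K(\Qtrue)$. I would bridge this by a completion-of-squares identity: for fixed $P$, the map $K \mapsto (A+BK)^TP(A+BK) + K^TK$ is minimized in the positive-semidefinite order at $K = K(Q,P)$, and explicitly
\[
(A+BK)^TP(A+BK) + K^TK = g(P) + (K - K(Q,P))^T(B^TPB+I)(K - K(Q,P)),
\]
where $g(P) := A^TPA - A^TPB(B^TPB+I)^{-1}B^TPA$ is the minimal value. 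Recalling that $(Q,P) \in \mathscr{D}$ is, via the Schur complement of $H(Q,P)$, equivalent to $g(P) - P + Q \succeq 0$, and evaluating the identity at $K = K(\Qtrue)$, I obtain
\[
M(Q,P) = \bigl(g(P) - P + Q\bigr) + (K(\Qtrue) - K(Q,P))^T(B^TPB+I)(K(\Qtrue) - K(Q,P)).
\]
Both summands are positive semidefinite (the first by feasibility, the second since $B^TPB + I \succ 0$), so $M(Q,P) \succeq 0$. This yields $\psi_t(Q,P) \geq -\mE_{\mfx_t}[\|\mfu_t\|^2]$ for each $t$, and summing gives $\Psi(Q,P) \geq -\sum_{t=1}^{N-1}\mE_{\mfx_t}[\|\mfu_t\|^2]$, which is finite by Lemma~\ref{lem:persistent_excitation}; this establishes the lower-bound claim.

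For the second claim I would first verify that $(\Qtrue,\Ptrue)$ attains this bound: since $(\Qtrue,\Ptrue)$ solves the DARE \eqref{eq:DARE}, the DARE rewrites exactly as $M(\Qtrue,\Ptrue) = 0$, so $\psi_t(\Qtrue,\Ptrue) = -\mE_{\mfx_t}[\|\mfu_t\|^2]$ for every $t$ and $\Psi(\Qtrue,\Ptrue)$ equals the lower bound. Hence any optimizer $(Q^\star,P^\star)$ also attains it, forcing $\sum_{t=1}^{N-1}\mE_{\mfx_t}[\mfx_t^TM(Q^\star,P^\star)\mfx_t] = 0$. As each summand is nonnegative, every one must vanish, i.e.\ $\tr(M(Q^\star,P^\star)\,\mE_{\mfx_t}[\mfx_t\mfx_t^T]) = 0$. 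By Lemma~\ref{lem:persistent_excitation} we have $\mE_{\mfx_t}[\mfx_t\mfx_t^T] \succ 0$, which together with $M(Q^\star,P^\star) \succeq 0$ forces $M(Q^\star,P^\star) = 0$. Returning to the splitting of $M(Q^\star,P^\star)$, both nonnegative summands must vanish; in particular $(K(\Qtrue) - K(Q^\star,P^\star))^T(B^TP^\star B+I)(K(\Qtrue) - K(Q^\star,P^\star)) = 0$, and since $B^TP^\star B + I \succ 0$ this gives $K(Q^\star,P^\star) = K(\Qtrue) = K(\Qtrue,\Ptrue)$. The main obstacle throughout is the bridging step reconciling the feasibility gain $K(Q,P)$ with the true gain $K(\Qtrue)$ via the completion-of-squares identity; once $M(Q,P) \succeq 0$ is in hand, the remaining arguments are standard positivity and trace manipulations.
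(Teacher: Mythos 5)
Your proof is correct and is essentially the paper's own argument: your completion-of-squares identity in $K$ is precisely the block-LDL (Schur-complement) factorization of $H(Q,P)$ that the paper applies to the quadratic form in $(\mfu_t,\mfx_t)$, and both proofs then combine nonnegativity of the two resulting terms on $\mathscr{D}$, attainment of the bound at $(\Qtrue,\Ptrue)$, and persistent excitation (Lemma~\ref{lem:persistent_excitation}) to force the gain equality. The only cosmetic difference is that you substitute $\mfu_t=K(\Qtrue)\mfx_t$ at the outset and deduce $K(Q^\star,P^\star)=K(\Qtrue)$ from your $M(Q^\star,P^\star)=0$ at the matrix level, whereas the paper keeps $\mfu_t$ explicit and obtains the same conclusion from the a.s.\ identity $\mfu_t=K(Q^\star,P^\star)\mfx_t$ evaluated at $t=1$.
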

\begin{proof}
By the construction in the previous section, we know that 
\begin{align*}
\Psi(\cdot) &= \sum_{t=1}^{N-1}\psi_t(\cdot) = \sum_{t=1}^{N-1}\mE_{\mfx_t} \Big[(A\mfx_t+B\mfu_t)^T P(A\mfx_t+B\mfu_t)\\
&\qquad-\mfx_t^TP\mfx_t+\mfx_t^TQ\mfx_t\Big]-\mE_{\mfx_t}[\|\mfu_t\|^2]\\
& = \sum_{t=1}^{N-1}
\begin{bmatrix}
\mfu_t^T &\mfx_t
\end{bmatrix} 
H(\cdot)
\begin{bmatrix}
\mfu_t \\ \mfx_t
\end{bmatrix}-\mE_{\mfx_t}[\|\mfu_t\|^2],
\end{align*}
where $H(\cdot)$ is defined in \eqref{eq:def_H}.
Since on the domain $\mathscr{D}$, $H(\cdot)$ is positive semi-definite and $B^TPB+I\succ 0$, it holds that $H(\cdot)\backslash (B^TPB+I)=A^TP A-P+Q-A^TP B(B^TP B+I)^{-1}B^TP A\succeq 0$ \cite[Thm~1.12]{horn2005basic}. Therefore, we can further write the objective function as
\begin{align*}
&\Psi(\cdot) = \sum_{t=1}^{N-1} \Big( -\mE_{\mfx_t}[\|\mfu_t\|^2 +\mE_{\mfx_t}\Big[
\begin{bmatrix}
\mfu_t^T &\mfx_t
\end{bmatrix} \\
&
\begin{bmatrix}
I \\
A^TPB(B^TPB+I)^{-1} &I
\end{bmatrix}
\!\!\!
\begin{bmatrix}
B^TPB+I \\ &H\backslash (B^TPB+I)
\end{bmatrix}\\
&
\begin{bmatrix}
I &(B^TPB+I)^{-1}B^TPA\\
 &I
\end{bmatrix}
\!\!
\begin{bmatrix}
\mfu_t \\ \mfx_t
\end{bmatrix}\Big] \Big)\\
&=\sum_{t=1}^{N-1} \Big(-\mE_{\mfx_t}[\|\mfu_t\|^2 + \mE_{\mfx_t}\left[\mfx_t^T \left(H(\cdot)\backslash (B^TPB+I)\right)\mfx_t\right] + \\
& \mE_{\mfx_t}\left[\|(B^TPB+I)^{-\frac{1}{2}}\left(\mfu_t+(B^TPB+I)^{-1}B^TPA\mfx_t\right)\|^2\right] \Big),
\end{align*}
and conclude that $\Psi(\cdot)\ge -\mE_{\mfx_t}[\|\mfu_t\|^2]$.
Moreover, the ``true" parameters $(\Qtrue, \Ptrue)$ attain the lower bound since $(\Qtrue, \Ptrue)$ satisfies the DARE \eqref{eq:DARE} and \eqref{eq:ctrl_gain}. 
This implies that for any optimal solution $(Q^\star,P^\star)$, the optimal value is $\Psi(Q^\star,P^\star)=-\mE_{\mfx_t}[\|\mfu_t\|^2$.
Since the second and the third term of the sum in the above equation are non-negative, it follows that 
\begin{subequations}
\begin{align}
&\mE_{\mfx_t}\Big[\|(B^TP^\star B\!+\!I)^{-\frac{1}{2}}\Big(\mfu_t\!+\!\underbrace{(B^TP^\star B\!+\!I)^{-1}B^TP^\star A}_{K(Q^\star,P^\star)}\mfx_t\Big)\|^2\Big] \nonumber\\
& = 0,\label{eq:optimal_control_obj}\\
&\mE_{\mfx_t}\left[\mfx_t^T \left(H(Q^\star,P^\star)\backslash (B^TP^\star B\!+\!I)\right)\mfx_t\right]\nonumber\\
& =\tr\! \left(H(Q^\star,P^\star)\backslash (B^TP^\star B\!+\!I)\right)\mE_{\mfx_t}[\mfx_t\mfx_t^T]) = 0,\; \forall t.\label{eq:riccati_error_obj}
\end{align}
\end{subequations}
By Lemma \ref{lem:persistent_excitation}, we know that $\mE_{\mfx_t}[\mfx_t\mfx_t^T]\succ 0$. Now, recall that $H(Q^\star,P^\star)\backslash (B^TP^\star B+I)\succ 0$. Therefore, by \eqref{eq:riccati_error_obj} it follows that $H(Q^\star,P^\star)\backslash (B^TP^\star B+I) = A^TP^\star A\! -\! P^\star\! +\!Q^\star\! -\! A^TP B(B^TP^\star B+I)^{-1}B^TP^\star A=0$, namely, $(Q^\star,P^\star)$ satisfies the DARE \eqref{eq:DARE}. This means that any optimal solution $(Q^\star,P^\star)$ to \eqref{eq:opt_prob} is a ``valid pair" for the ``averaged cost per stage" linear quadratic regulator \eqref{eq:forward_problem}.
On the other hand, by \eqref{eq:optimal_control_obj}, it follows that $\mfu_t+K(Q^\star,P^\star)\mfx_t=0$ holds a.s.. Since $\mfu_t = K(\Qtrue,\Ptrue)\mfx_t$, for $t=1$ it in particular follows that 
\begin{align*}
&K(Q^\star,P^\star) \bar{\mfx}= K(\Qtrue,\Ptrue) \bar{\mfx},\quad \text{a.s.}
\end{align*}
By post-multiplying with $ \bar{\mfx}^T$ and taking the expectation with respect to $\bar{\mfx}$, we get
\[
K(Q^\star,P^\star) \mE_{\bar{\mfx}}[\bar{\mfx}\bar{\mfx}^T] =  K(\Qtrue,\Ptrue)  \mE_{\bar{\mfx}}[\bar{\mfx}\bar{\mfx}^T].
\]
Using Lemma~\ref{lem:persistent_excitation}, it follows that $K(Q^\star,P^\star) = K(\Qtrue,\Ptrue)$, thus finishing the proof.
\end{proof}

Next, we show that the approximation \eqref{eq:Psi_empirical_approx} in fact converges in an appropriate sense.

\begin{lemma}[Uniform law of large numbers]\label{lem:uniform_law_of_large_numbers}
Under Assumption~\ref{ass:controlability_and_full_rank},\ref{ass:iid}, \ref{ass:observability} and \ref{ass:persistent_excitation}, for large enough $\varphi$, it holds for the empirically approximated estimator $\Psi^{\mfY}_M(\cdot)$ that $\sup_{(Q,P)\in\mathscr{D}(\varphi)} |\Psi_M^{\mfY}(\cdot)-\Psi(\cdot)|\overset{a.s.}\rightarrow 0$
as $M\rightarrow \infty$.
\end{lemma}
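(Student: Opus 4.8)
The plan is to exploit the fact that the map $(Q,P)\mapsto \Psi^{\mfY}_M(Q,P)-\Psi(Q,P)$ is \emph{linear} in its arguments, since the two $\tr(Q\Sigma_v)$ and $\tr(P\Sigma_w)$ correction terms appearing in both \eqref{eq:Psi_obj_func} and \eqref{eq:Psi_empirical_approx} cancel exactly. Introducing the ``sample-minus-population'' matrices
\[
\Delta_t^M := \tfrac{1}{M}\mfY_t\mfY_t^T - \mE_{\mfy_t}[\mfy_t\mfy_t^T], \qquad t=1,\ldots,N,
\]
and using $\mfy^T A \mfy = \tr(A\mfy\mfy^T)$ to rewrite the expectations in $\Psi$, the difference collapses to
\[
\Psi^{\mfY}_M(Q,P)-\Psi(Q,P) = \tr(P\Delta_N^M) - \tr(P\Delta_1^M) + \tr\Big(Q\textstyle\sum_{t=1}^{N-1}\Delta_t^M\Big),
\]
where the coefficient matrices $\Delta_t^M$ no longer depend on $(Q,P)$. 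This is the crucial structural feature that reduces the uniform statement to a pointwise one.

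Next I would establish that each $\Delta_t^M$ vanishes almost surely. Since $\tfrac{1}{M}\mfY_t\mfY_t^T = \tfrac{1}{M}\sum_{i=1}^M \mfy_t^i\mfy_t^{iT}$ is an empirical average of I.I.D.\ copies of the $n\times n$ random matrix $\mfy_t\mfy_t^T$, the strong law of large numbers applies entrywise provided $\mE[\|\mfy_t\|^2]<\infty$. This finite-second-moment condition follows from $\mfy_t = \mfx_t+\mfv_t$ together with $\mE[\|\mfx_t\|^2]<\infty$ (Lemma~\ref{lem:persistent_excitation}) and $\|\Sigma_v\|_F<\infty$ (Assumption~\ref{ass:iid}). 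As the matrices are of fixed dimension, entrywise almost-sure convergence of the $n^2$ entries is equivalent to $\|\Delta_t^M\|_F\overset{a.s.}\rightarrow 0$ for each fixed $t$.

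Finally I would uniformize using the trace Cauchy--Schwarz inequality $|\tr(XY)|\le \|X\|_F\|Y\|_F$ for symmetric $X,Y$. On $\mathscr{D}(\varphi)$ one has $\|Q\|_F\le\varphi$ and $\|P\|_F\le\varphi$, hence
\[
\sup_{(Q,P)\in\mathscr{D}(\varphi)}\big|\Psi^{\mfY}_M(Q,P)-\Psi(Q,P)\big| \le \varphi\Big(\|\Delta_N^M\|_F + \|\Delta_1^M\|_F + \textstyle\sum_{t=1}^{N-1}\|\Delta_t^M\|_F\Big).
\]
The right-hand side is a fixed finite multiple ($\varphi<\infty$, and only $N+1$ summands since $N$ is finite) of quantities each converging to zero almost surely, so it converges to zero almost surely, which is the claim. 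Note that any finite $\varphi$ suffices here; its being ``large enough'' is used elsewhere only to guarantee $(\Qtrue,\Ptrue)\in\mathscr{D}(\varphi)$.

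I do not expect a genuine obstacle in this argument: precisely because the objective is linear in the decision variables, the usual machinery for a uniform law of large numbers (bracketing numbers or stochastic equicontinuity) is unnecessary, as the boundedness of $\mathscr{D}(\varphi)$ combined with linearity collapses uniform convergence to the almost-sure convergence of the finitely many coefficient matrices $\Delta_t^M$. The one point demanding care is verifying $\mE[\|\mfx_t\|^2]<\infty$ for every $t$, which is exactly what Lemma~\ref{lem:persistent_excitation} supplies under the persistent-excitation Assumption~\ref{ass:persistent_excitation}.
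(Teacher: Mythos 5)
Your proof is correct, but it takes a genuinely different route from the paper. The paper's own proof invokes a general uniform law of large numbers (Jennrich's theorem): it notes that $\mathscr{D}(\varphi)$ is compact, that the integrand $\psi(\cdot)$ is continuous in $(Q,P)$ for every realization of $\mfy_{1:N}$ and measurable in $\mfy_{1:N}$ for every $(Q,P)$, and that $\psi$ admits an integrable dominating function; uniform almost-sure convergence then follows from the cited theorem. You instead exploit the specific bilinear structure of the problem: since the $\tr(Q\Sigma_v)$ and $\tr(P\Sigma_w)$ terms cancel, the difference $\Psi^{\mfY}_M-\Psi$ is a trace pairing of $(Q,P)$ against the finitely many sample-minus-population matrices $\Delta_t^M$, so the uniform statement collapses to $N+1$ applications of the ordinary entrywise SLLN (valid because $\mE[\|\mfy_t\|^2]\le 2\mE[\|\mfx_t\|^2]+2\tr(\Sigma_v)<\infty$ by Lemma~\ref{lem:persistent_excitation} and Assumption~\ref{ass:iid}) combined with $|\tr(XY)|\le\|X\|_F\|Y\|_F$ and the bound $\varphi$ on the feasible set. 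What each approach buys: the paper's argument is shorter on the page and follows a template that extends to objectives that are \emph{nonlinear} in the parameters (as in the cited finite-horizon work), whereas yours is self-contained, avoids verifying the domination and measurability hypotheses of an external theorem, makes explicit that \emph{any} finite $\varphi$ suffices for this lemma (the ``large enough'' qualifier being needed only so that $(\Qtrue,\Ptrue)\in\mathscr{D}(\varphi)$ in the consistency proof), and yields the quantitative bound $\sup_{(Q,P)\in\mathscr{D}(\varphi)}|\Psi^{\mfY}_M-\Psi|\le\varphi\big(\|\Delta_N^M\|_F+\|\Delta_1^M\|_F+\sum_{t=1}^{N-1}\|\Delta_t^M\|_F\big)$, which, via a CLT for the sample second-moment matrices, would directly support the $\mathcal{O}(M^{-1/2})$ convergence rate conjectured in the paper's conclusions.
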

\begin{proof}
The statement can be proved by following a similar route of \cite[Proof of Lem.~4.2]{zhang2021inverse}. 
Denote $\Psi(\cdot) = \mE_{\mfy_{1:N}}[\psi(\cdot)]$.
Notably, $\mathscr{D}(\varphi)$ is compact, $\psi(\cdot)$ is linear with respect to either $(Q,P)$ or $\mfy_{1:N}$. Hence $\psi(\cdot)$ is continuous at each $(Q,P)\in\mathscr{D}(\varphi)$ for all $\mfy_{1:N}$ and measurable for $\mfy_{1:N}$ at each $(Q,P)$.
Similarly, we can show that $\psi(\cdot)$ is actually bounded above by some dominating function $d(\mfy_{1:N})$ and $\mE[d(\mfy_{1:N})]<\infty$. Thus every condition of \cite[Thm.~2]{jennrich1969asymptotic} is satisfied and the statement follows.
\end{proof}

Equipped with the Lemmas above, we are ready to present the main theorem of the work, namely, the statistical consistency of the control gain estimate. 
\begin{theorem}[Statistical consistency]\label{thm:statistical_consistency}
Under Assumption~\ref{ass:controlability_and_full_rank}, \ref{ass:iid}, \ref{ass:observability}, \ref{ass:persistent_excitation}, let $(Q_M^\star,P_M^\star)$ be an optimal solution to the estimator \eqref{eq:approx_opt_prob}. Moreover, let $K(\cdot)$ be defined by \eqref{eq:ctrl_gain}. For $\varphi$ large enough, it holds that $K(Q_M^\star,P_M^\star)\overset{p}\rightarrow K(\Qtrue,\Ptrue)$ as $M\rightarrow\infty$.
\end{theorem}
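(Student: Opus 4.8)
The plan is to run the classical consistency argument for M-estimators (in the spirit of \cite{van1998asymptotic}), but adapted to the fact that the population problem \eqref{eq:opt_prob} does \emph{not} have a unique minimizer. The two engines are already available: Lemma~\ref{lem:bounded_from_below}, which guarantees that every global minimizer $(Q,P)$ of $\Psi$ over $\mathscr{D}$ attains the value $\Psi^\star := -\mE_{\mfx_t}[\|\mfu_t\|^2]$ and produces the \emph{same} control gain $K(Q,P)=K(\Qtrue,\Ptrue)$; and Lemma~\ref{lem:uniform_law_of_large_numbers}, the uniform law of large numbers $\sup_{(Q,P)\in\mathscr{D}(\varphi)}|\Psi_M^{\mfY}(Q,P)-\Psi(Q,P)|\overset{a.s.}\rightarrow 0$. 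The object we must control is the continuous functional $K(Q,P)=-(B^TPB+I)^{-1}B^TPA$, which is well defined and continuous on $\mathscr{D}(\varphi)$ because $B^TPB+I\succeq I$ there.

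First I would fix $\varphi$ large enough that $(\Qtrue,\Ptrue)\in\mathscr{D}(\varphi)$, so that the constrained minimum of $\Psi$ over $\mathscr{D}(\varphi)$ still equals $\Psi^\star$ and Lemma~\ref{lem:bounded_from_below} applies to every minimizer over $\mathscr{D}(\varphi)$ (any such minimizer is also a global minimizer over the larger set $\mathscr{D}$). Next, for a fixed tolerance $\epsilon>0$ I would introduce the ``bad set'' $\mathscr{D}_\epsilon := \{(Q,P)\in\mathscr{D}(\varphi) : \|K(Q,P)-K(\Qtrue,\Ptrue)\|_F \geq \epsilon\}$. If $\mathscr{D}_\epsilon=\emptyset$ the conclusion is immediate, so assume otherwise. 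The crucial step is a \emph{well-separation} (margin) estimate: I claim $\delta := \inf_{(Q,P)\in\mathscr{D}_\epsilon}\Psi(Q,P)-\Psi^\star > 0$. Indeed, $\mathscr{D}(\varphi)$ is compact (closed by the LMI and norm constraints, bounded by the norm bound) and $K$ is continuous, so $\mathscr{D}_\epsilon$ is compact; since $\Psi$ is continuous (indeed affine in $(Q,P)$), it attains its infimum over $\mathscr{D}_\epsilon$ at some $(Q',P')$. Were that infimum equal to $\Psi^\star$, then $(Q',P')$ would be a global minimizer, and Lemma~\ref{lem:bounded_from_below} would force $K(Q',P')=K(\Qtrue,\Ptrue)$, contradicting $(Q',P')\in\mathscr{D}_\epsilon$. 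Hence $\delta>0$.

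Finally I would close the loop with the uniform convergence. On the event $A_M := \{\sup_{(Q,P)\in\mathscr{D}(\varphi)}|\Psi_M^{\mfY}-\Psi| < \delta/3\}$, whose probability tends to one by Lemma~\ref{lem:uniform_law_of_large_numbers} (a.s.\ convergence implies $\mP(A_M)\to 1$), optimality of $(Q_M^\star,P_M^\star)$ together with $(\Qtrue,\Ptrue)\in\mathscr{D}(\varphi)$ gives $\Psi_M^{\mfY}(Q_M^\star,P_M^\star)\leq\Psi_M^{\mfY}(\Qtrue,\Ptrue)\leq\Psi^\star+\delta/3$, whereas the membership $(Q_M^\star,P_M^\star)\in\mathscr{D}_\epsilon$ would force $\Psi_M^{\mfY}(Q_M^\star,P_M^\star)\geq\Psi(Q_M^\star,P_M^\star)-\delta/3\geq\Psi^\star+2\delta/3$, a contradiction. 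Thus $\|K(Q_M^\star,P_M^\star)-K(\Qtrue,\Ptrue)\|_F<\epsilon$ on $A_M$, so $\mP(\|K(Q_M^\star,P_M^\star)-K(\Qtrue,\Ptrue)\|_F\geq\epsilon)\to 0$; as $\epsilon$ was arbitrary, this is exactly $K(Q_M^\star,P_M^\star)\overset{p}\rightarrow K(\Qtrue,\Ptrue)$.

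I expect the main obstacle to be conceptual rather than computational: because the model is non-identifiable, one cannot invoke a black-box consistency theorem that presumes a unique population minimizer, and the naive statement ``$(Q_M^\star,P_M^\star)$ converges'' is simply false. The argument must be pushed entirely onto the functional $K$, and the load-bearing step is the margin estimate $\delta>0$, which is precisely where compactness of $\mathscr{D}(\varphi)$ (hence the necessity of the domain bound $\varphi$) and the constancy of $K$ on the minimizer set from Lemma~\ref{lem:bounded_from_below} are simultaneously used.
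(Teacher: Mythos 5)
Your proposal is correct, but the mechanism by which you pass from convergence of objective values to convergence of the gain is genuinely different from the paper's. The paper first shows, exactly as you do, that the empirical minimizers asymptotically attain the population optimal value $\Psi^\star$ (via Lemma~\ref{lem:uniform_law_of_large_numbers} and the optimality inequality $\Psi^{\mfY}_M(Q^\star_M,P^\star_M)\leq\Psi^{\mfY}_M(\Qtrue,\Ptrue)$); but then, instead of a separation argument, it invokes an \emph{explicit quantitative} lower bound (adapted from the finite-horizon work): for every $(Q,P)\in\mathscr{D}$,
\begin{align*}
\Psi(Q,P)-\Psi^\star \;\geq\; \sum_{t=1}^{N-1}\tr\!\left[(K(Q,P)-\Ktrue)^T(B^TPB+I)(K(Q,P)-\Ktrue)\,\mE_{\mfx_t}[\mfx_t\mfx_t^T]\right]\;\geq\;0,
\end{align*}
so that $\Psi(Q^\star_M,P^\star_M)\overset{p}\rightarrow\Psi^\star$, together with $B^TP^\star_MB+I\succeq I$ and $\mE_{\mfx_t}[\mfx_t\mfx_t^T]\succ 0$ (Lemma~\ref{lem:persistent_excitation}), directly forces $K(Q^\star_M,P^\star_M)\overset{p}\rightarrow\Ktrue$. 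Your route replaces this algebraic inequality by a Wald-type well-separation argument: compactness of $\mathscr{D}(\varphi)$, continuity of $\Psi$ and of $(Q,P)\mapsto K(Q,P)$, and the constancy of $K$ on the population minimizer set (Lemma~\ref{lem:bounded_from_below}) yield a margin $\delta>0$ on the bad set $\mathscr{D}_\epsilon$, and your handling of the non-uniqueness of the population minimizer (pushing the whole argument onto the functional $K$ rather than onto the parameters) is exactly the right fix and is also the correct reading of why Lemma~\ref{lem:bounded_from_below} applies to minimizers over $\mathscr{D}(\varphi)$. The trade-off: your argument is softer and more classical, needing only continuity and compactness, but it is purely qualitative --- the margin $\delta$ is obtained by contradiction and is not explicit --- whereas the paper's quadratic bound ties the excess objective value to $\|K(Q,P)-\Ktrue\|_F^2$ explicitly, which is the natural starting point for the convergence-rate analysis ($\mathcal{O}(M^{-1/2})$) that the paper conjectures from its numerics and leaves as future work; your approach, by contrast, leans harder on the domain bound $\varphi$, since compactness of $\mathscr{D}(\varphi)$ is load-bearing for the margin step and not just for the uniform law of large numbers.
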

\begin{proof}
By Lemma~\ref{lem:bounded_from_below}, if $(Q^\star,P^\star)$ is an optimal solution to \eqref{eq:opt_prob}, then its corresponding control gain estimate $K(Q^\star,P^\star)$ defined by \eqref{eq:ctrl_gain} is the same as the ``true" control gain $K(\Qtrue,\Ptrue)$.
Here we will use the short-hand notation $\Ktrue$ for $K(\Qtrue,\Ptrue)$.
Hence we can define the set of all optimal solutions to \eqref{eq:opt_prob} as
\begin{align*}
\Xi(\Ktrue) :=  \{ & (Q,P) \in \mS_+^n \times \mS_+^n \mid K(Q,P) = \Ktrue \text{ is given by \eqref{eq:ctrl_gain},} \\
& \text{and where } (Q,P) \text{ satisfies \eqref{eq:DARE}}   \}.
\end{align*}
Moreover, note that for $\varphi$ large enough, $\Xi(\Ktrue) \cap \mathscr{D}(\varphi) \neq \emptyset$ since $(\Qtrue, \Ptrue) \in \Xi(\Ktrue) \cap \mathscr{D}(\varphi)$.
In addition, let $\Xi^{\mfY}_M$ be the set of optimal solutions to the estimator \eqref{eq:approx_opt_prob}.  

To prove the statistical consistency of the estimate $K(Q_M^\star,P_M^\star)$, we first prove that for all $(Q^\star_M,P^\star_M) \in \Xi^{\mfY}_M$ and all $(Q^\star,P^\star) \in \Xi(\Ktrue)$, it holds that
$\Psi(Q^\star_M,P^\star_M)  \overset{p}{\rightarrow}  \Psi(Q^\star,P^\star)$
as $M \to \infty$. 
To this end, for any $(Q^\star_M,P^\star_M)\in \Xi^{\mfY}_M$, which is optimal to \eqref{eq:approx_opt_prob}, it holds that 
\begin{equation}\label{eq:optimality_ineq_psi}
\Psi^{\mfY}_M(Q^\star_M,P^\star_M) \leq  \Psi_M^{\mfY}(\tilde{Q}^\star, \tilde{P}^\star)
\end{equation}
for all $(\tilde{Q}^\star, \tilde{P}^\star) \in \Xi(\Ktrue) \cap \mathscr{D}(\varphi)$. By  Lemma~\ref{lem:uniform_law_of_large_numbers} we have that
$\sup_{(Q,P) \in \mathscr{D}(\varphi)} |\Psi^{\mfY}_M(Q,P) - \Psi(Q,P)| \overset{a.s.}{\rightarrow} 0,$
which implies that
$\sup_{(Q,P) \in \mathscr{D}(\varphi)} |\Psi^{\mfY}_M(Q,P) - \Psi(Q,P)| \overset{p}{\rightarrow} 0.$
This in turn implies that for all $(\tilde{Q}^\star, \tilde{P}^\star) \in \Xi(\Ktrue) \cap \mathscr{D}(\varphi)$ we have that
\begin{equation}\label{eq:op1}
\Psi^{\mfY}_M(\tilde{Q}^\star, \tilde{P}^\star) = \Psi(\tilde{Q}^\star, \tilde{P}^\star) + o_P(1)
\end{equation}
uniformly, i.e., with the same sequence of random variables converging to 0 in probability as $M \to \infty$ for all  $(\tilde{Q}^\star, \tilde{P}^\star) \in \Xi(\Ktrue) \cap \mathscr{D}(\varphi)$.
By using \eqref{eq:op1} together with \eqref{eq:optimality_ineq_psi}, for all $(Q^\star_M,P^\star_M) \in \Xi^{\mfY}_M$,  all $(\tilde{Q}^\star, \tilde{P}^\star) \in \Xi(\Ktrue) \cap \mathscr{D}(\varphi)$, and all $(Q^\star,P^\star) \in \Xi(\Ktrue)$, it holds that
\begin{align}
& \Psi^Y_M(Q^\star_M,P^\star_M) \leq  \Psi_M^Y(\tilde{Q}^\star, \tilde{P}^\star) =  \Psi(\tilde{Q}^\star, \tilde{P}^\star) + o_P(1) \nonumber \\
& =  \Psi(Q^\star, P^\star) + o_P(1) \label{eq:psi_m_and_psi_ineq}
\end{align}
uniformly, where the last equality holds since $(\tilde{Q}^\star, \tilde{P}^\star)$,  $(Q^\star,P^\star) \in \Xi(\Ktrue)$. 
This in turn means that uniformly for all $(Q^\star_M,P^\star_M) \in \Xi^{\mfY}_M$ and all  $(Q^\star, P^\star) \in \Xi(\Ktrue)$,
\begin{align*}
0 & \leq \Psi(Q^\star_M,P^\star_M) - \Psi(Q^\star,P^\star) \\
& \leq \Psi(Q^\star_M,P^\star_M) - \Psi^Y_M(Q^\star_M,P^\star_M) + o_P(1) \\
& \leq | \Psi(Q^\star_M,P^\star_M) - \Psi^Y_M(Q^\star_M,P^\star_M) | + o_P(1) \\
& \leq \sup_{(Q,P) \in \mathscr{D}(\varphi)} |\Psi^Y_M(Q,P) - \Psi(Q,P)| + o_P(1).
\end{align*}
where the first inequality holds since $(Q^\star_M,P^\star_M) \in \mathscr{D}(\varphi) \subseteq \mathscr{D}$ and $(Q^\star,P^\star)$ is optimal to \eqref{eq:opt_prob}, and the second inequality comes from \eqref{eq:psi_m_and_psi_ineq}.
By Lemma~\ref{lem:uniform_law_of_large_numbers}, the last expression converges to $0$ in probability as $M \to \infty$, and hence for all $(Q^\star_M,P^\star_M) \in \Xi^{\mfY}_M$ and all $(Q^\star,P^\star) \in \Xi(\Ktrue)$, it holds that $\Psi(Q^\star_M,P^\star_M)  \overset{p}{\rightarrow}  \Psi(Q^\star,P^\star)$
as $M \to \infty$.

Next, for any $(Q, P) \in \mathscr{D}$, let $K(Q, P) := -(B^T P B+I)^{-1}B^T P A$, i.e., be compute analogously to \eqref{eq:ctrl_gain} although $(Q, P)$ might only satisfy \eqref{eq:relaxed_DARE_inequality} and not DARE \eqref{eq:DARE}.
Using the above result we now prove that for all $(Q^\star_M,P^\star_M) \in \Xi^{\mfY}_M$, $K(Q^\star_M,P^\star_M)  \overset{p}{\rightarrow} \Ktrue$ as $M \to \infty$, i.e., statistical consistency.
To this end,
by an adaptation of  \cite[Proofs of Lem.~3.1 and Prop.~4.1]{zhang2021inverse} we have that for all $(Q, P) \in \mathscr{D}$, 
\begin{align*}
& \Psi(Q, P) + \sum_{t=1}^{N-1}\mE_{\mfx_t} \left[\| \bar{\rvu}_t \|^2\right] \\
& \geq \!\! \sum_{t=1}^{N-1} \! \mE_{\mfx_t} \! \left[\mfx_t^{T} \! ( K(Q, P) \! - \! \bar{K})^T \! (B^TPB \! + \! I) (K(Q, P) \! - \! \bar{K})\mfx_t^{}\right]\nonumber\\
&= \!\! \sum_{t=1}^{N-1} \!\! \tr \! \left[ \! (K(Q, P) \! - \! \bar{K})^T \! (B^TPB \! + \! I) (K(Q, P) \! - \! \bar{K}) \! \mE_{\mfx_t} \! \left[ \! \mfx_t\mfx_t^T\right] \! \right]\ge 0.\label{eq:stochastic_H_lower_bound}
\end{align*}
Since $Q^\star_M,P^\star_M \in \mathscr{D}(\varphi) \subseteq \mathscr{D}$, this means that
\begin{align*}
& \Psi(Q^\star_M,P^\star_M)  + \sum_{t=1}^{N-1}\mE_{\mfx_t} \left[\| \bar{\rvu}_t \|^2\right] \\
& \geq \! \sum_{t=1}^{N-1} \!\! \tr \! \Big[ \! (K(Q^\star_M, P^\star_M) \! - \! \bar{K})^T \! (B^T P^\star_M B \! + \! I) (K(Q^\star_M, P^\star_M) \! - \! \bar{K}) \\
& \quad \mE_{\mfx_t} \! \left[ \! \mfx_t\mfx_t^T\right] \! \Big] \geq 0 = \Psi(Q^\star, P^\star) + \sum_{t=1}^{N-1}\mE\left[\| \bar{\rvu}_t \|^2\right].
\end{align*}
Since $\Psi(Q^\star_M,P^\star_M)  \overset{p}{\rightarrow}  \Psi(Q^\star,P^\star)$, we must have that
\begin{align*}
& \sum_{t=1}^{N-1} \! \tr \! \Big[ \! (K(Q^\star_M,P^\star_M) \! - \! \bar{K})^T \! (B^T P^\star_M B \! + \! I) (K(Q^\star_M,P^\star_M) \! - \! \bar{K}) \\
& \quad \mE_{\mfx_t} \! \left[ \! \mfx_t\mfx_t^T\right] \! \Big]  \overset{p}{\rightarrow} 0.
\end{align*}
Since $\mE_{\mfx_t} \! \left[ \! \mfx_t\mfx_t^T\right] \succ 0$ by Lemma~\ref{lem:persistent_excitation}, and since $B^T P^\star_M B + I \succeq I$, this means that
$K(Q^\star_M,P^\star_M)  \overset{p}{\rightarrow} \Ktrue$ as $M \to \infty$. 
\end{proof}

\section{Numerical simulation}\label{sec:num_sim}

In this section we illustrate the proposed IOC method through a numerical simulation. More precisely, we consider the following scenario in rehabilitation: the patient is seated at a desk in front of a screen. On the desk there is a crank, with viscosity (damping), which when rotated moves a pointer left-and-right on the screen. The patient is repeatedly asked rotate the crank to place the pointer in a specific position (see Fig.~\ref{fig:rehablitation_illustration}).
\begin{figure}[!htpb]
\centering
\includegraphics[width=0.3\textwidth]{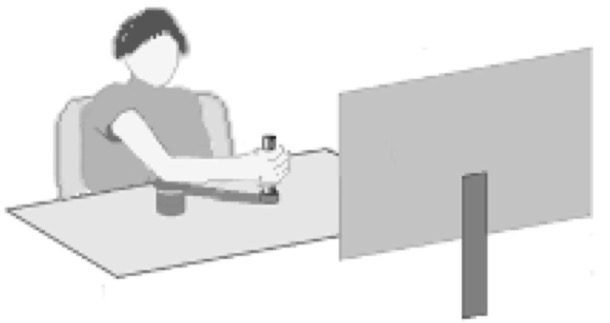}
\caption{The patient is asked to rotate a crank to place the pointer on the screen in a specific position.}
\label{fig:rehablitation_illustration}
\end{figure}

To model the dynamics of this set-up, let $\mfx_1,\mfx_2$ be the crank's angular position and the angular velocity, respectively, and let $\mfx = [\mfx_1,\mfx_2]^T$.  In addition, let $\mfu$ be the torque that is applied to the device by the patient.
A continuous-time physical model of such device is
\[
\dot{\mfx}(t) = \hat{A}\mfx(t)+\hat{B}\mfu(t)+\mfw(t),
\]
where $\mfw$ is
zero-mean (centered) white noise. In particular, we assume
\[
\hat{A} = \begin{bmatrix}
0 &1\\0 &-\alpha/\mathcal{I}
\end{bmatrix}=
\begin{bmatrix}
0 &1\\0 &-4
\end{bmatrix},
\quad
\hat{B} = \begin{bmatrix}
0\\1/\mathcal{I}
\end{bmatrix}=
\begin{bmatrix}
0 \\3
\end{bmatrix},
\]
where $\alpha$ is the viscosity (damping) coefficient and $\mathcal{I}$ is the moment of inertia.

To get a discrete-time system, the above system is discretized with a sampling period $\Delta t = 0.05$, namely, $A = e^{\hat{A}\Delta t}$, $B=\int_0^{\Delta t} e^{\hat{A}\tau}d\tau B$.
The procedure of the patient finishing such rehabilitation task is then modelled as ``average cost per stage" linear quadratic regulator \eqref{eq:forward_problem} with the discretized dynamics.  Notably in such formulation, the patient's target angular velocity is zero, and without loss of generality the target position is therefore set to zero, i.e., the patient is steering towards the origin in state space.
Specifically, for each time step, the additive noise process $\mfw_t$ and measurement noise $\mfv_t$ are drawn from multivariate normal distributions $\mathcal{N}(0,\Sigma_w)$ and $\mathcal{N}(0,\Sigma_v)$ with the covariances
\begin{align*}
 \Sigma_w \!\!=\!\! 10^{-4}
\begin{bmatrix}
0.1039    &0.0677 \\0.0677    &0.0997
\end{bmatrix}, \Sigma_v \!\!=\!\! 10^{-4}
\begin{bmatrix}
0.2328   &-0.2253\\-0.2253    &0.2180
\end{bmatrix}.
\end{align*}
These covariances were randomly generated in a similar way as those in \cite[Sec.~6]{zhang2022indefinite}.
Moreover, we observe the trajectories for $N=120$ time steps.

We generate 100 batches with 50000 trajectories in each, all with $\Qtrue = I$. The initial values $\bar{\mfx}$ are sampled from a uniform distribution on $[-\frac{2}{3}\pi,0]\times [-0.1,0.1]$.
In each batch, we divide the trajectories into a monotone increasing sequence of sets consisting of  $M=100+100(k-1)$ trajectories, for $k=1,\ldots,500$, i.e., the trajectories of the smaller sets are also contained in the larger sets. We let the domain bound be $\varphi=10^4$ and solve the inverse optimal control problem \eqref{eq:approx_opt_prob} for each set of trajectories in each batch. 
Hence, for each number of trajectories $M$ we estimate 100 control gains $K_{est}:=K(Q_M^\star,P_M^\star)$ (one for each batch) and can thus compute 100 relative errors $\frac{\|K_{est}-\Ktrue\|_F}{\|\Ktrue\|_F}$ of the estimation. From these 100 estimates, we empirically approximate the mean and standard deviation of the relative estimation error for each $M$.  The result is illustrated in a log-log plot in Fig.~\ref{fig:statistical_consistency}.

\begin{figure}[!htpb]
\centering
\includegraphics[trim=2.5cm 0cm 2.5cm 0cm, clip, width=\columnwidth]{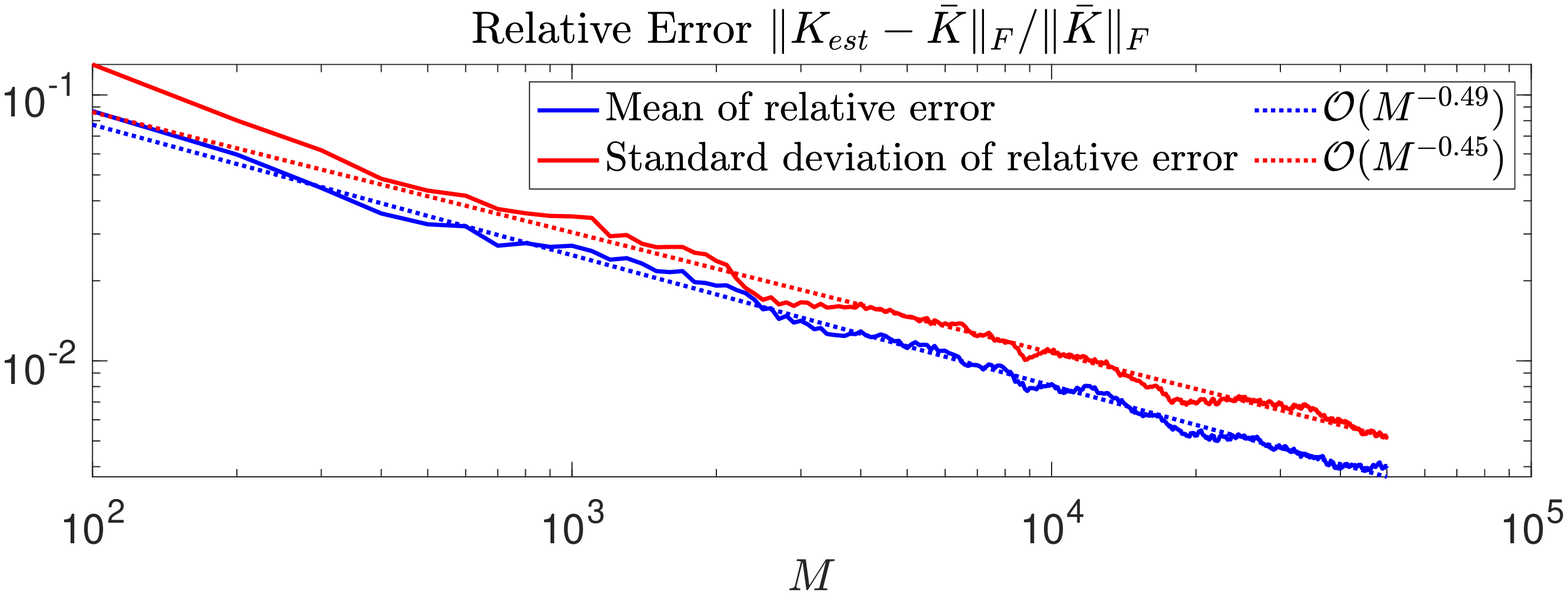}
\caption{Log-log plot of the mean and standard deviation of the relative error of $K_{est}$ v.s. the number of trajectories $M$.}
\label{fig:statistical_consistency}
\end{figure}

From Fig.~\ref{fig:statistical_consistency}, we can see that both the mean and the standard deviation of the relative estimation error decreases as $M$ increases; this is in accordance with the statement in Theorem~\ref{thm:statistical_consistency}. Furthermore, in the log-log plot, we see that both the mean and the standard deviation have an approximate linear decrease.
We thus fit the logarithmic data with an affine function and get a estimation of the convergence rate: $\texttt{Mean of relative error} \approx \mathcal{O}(M^{-0.49})$ and $\texttt{Standard deviation of relative error} \approx \mathcal{O}(M^{-0.45})$. 
The corresponding fitted affine functions are illustrated in Fig.~\ref{fig:statistical_consistency} with dashed lines.
Hence we suspect that the convergence rate of our proposed estimator is $\mathcal{O}(M^{-0.5})$, just like most M-estimators such as maximum log-likelihood \cite[p. 51]{van1998asymptotic}. The convergence rate analysis would be left for future work.

\section{Conclusions}\label{sec:conclusion}
In this work, we have considered the IOC problem for ``average cost per stage" linear quadratic regulators. The fact that the problem is in general ill-posed makes it impossible to recover the true underlying cost. Nevertheless, we constructed an IOC algorithm which recovers a statistically consistent estimate of the true underlying control gain.
There are many interesting continuations of this work: one would be to further investigate \eqref{eq:suffcient_necessary_cond_unidentifiability} to see if it is possible to derive more explicit conditions for when the problem is ill-posed. In particular, are all instances ill-posed, or are there problem instances that are not?
Another problem would be to try to prove that the convergence rate of the estimator is indeed $\mathcal{O}(M^{-0.5})$.
A third direction would be to formally investigate if the ill-posedness is an underlying factor for the  observed numerical ill-conditioning in the finite-time horizon case \cite[Sec.~5]{zhang2021inverse}. More specifically, for longer planning horizons $N$ in the finite-horizon setting, for $1 \approx t \ll N$ it would be expected that $P_t \approx \Ptrue$ and hence that $K_t \approx \Ktrue$. But that would mean that several different $\Qtrue + \Delta Q$ could give similar $K_t$. On the other hand, for $1 \ll t \approx N$, a possible issue is that the system is not as excited anymore, and hence the available data contains less information which thus leads to a numerical ill-conditioning at this end.

% To print the credit authorship contribution details
\printcredits

%% Loading bibliography style file
%\bibliographystyle{model1-num-names}
\bibliographystyle{cas-model2-names}

% Loading bibliography database
\bibliography{ref}

\end{document}